\documentclass[10pt]{elsarticle}

\usepackage{lineno,hyperref}
\modulolinenumbers[5]

\journal{Operations Research Letters}
\bibliographystyle{elsarticle-num}
\usepackage{amsfonts,amssymb,amsmath,amsthm}
\everymath{\displaystyle}

\newcommand{\al}{\alpha}

\newcommand{\la}{\lambda}
\newcommand{\de}{\delta}

\newcommand{\bx}{\bar x}

\newcommand {\R} {\mathbb R}

\newcommand {\B} {\mathbb B}


\newcommand {\bd} {{\rm bd}\,}
\newcommand {\cone} {{\rm cone}\,}

\newcommand {\Int} {{\rm int}\,}




\newcommand{\norm}[1]{\left\Vert#1\right\Vert}
\newcommand{\abs}[1]{\left\vert#1\right\vert}



\newcommand{\ang}[1]{\left\langle #1 \right\rangle}

\newcommand{\AND}{\quad\mbox{and}\quad}
\newcounter{mycount}





\makeatletter
\makeatother

\newtheorem{theorem}{Theorem}

\newtheorem{proposition}[theorem]{Proposition}
\newtheorem{lemma}[theorem]{Lemma}
\newtheorem{corollary}[theorem]{Corollary}
\newtheorem{corollary.pr}{Corollary}

\theoremstyle{definition}
\newtheorem{definition}[theorem]{Definition}
\theoremstyle{remark}
\newtheorem{remark}[theorem]{Remark}
\newtheorem{example}[theorem]{Example}

\usepackage{fullpage}
\usepackage{color}
\usepackage{minitoc}

\usepackage{bbm}
\usepackage{exscale,subfigure}
\usepackage{color}
\usepackage{hyperref}
\usepackage{graphicx}
\hypersetup{
	colorlinks=true,
	linkcolor=black, 
	citecolor=black, 
	urlcolor=black } 
\usepackage[weather,alpine,misc,geometry]{ifsym}

\newenvironment{subcondition}[1][\unskip]{%
	\par
	\noindent
	\textbf{Condition #1.}
	\noindent}
{}

\newcommand{\x}[1]{}
\vfuzz2pt 
\hfuzz6pt 
\usepackage{tikz}
\usetikzlibrary{patterns}
\usetikzlibrary{decorations.pathmorphing} 
\usetikzlibrary{matrix} 
\usetikzlibrary{arrows} 
\usetikzlibrary{calc} 
\usetikzlibrary{positioning,fit,calc} 
\usepackage{float}
\usetikzlibrary{arrows,decorations.markings,patterns}

\date{\today}
\usepackage[a4paper,bindingoffset=0.2in,%
left=0.5in,right=0.5in,top=0.5in,bottom=0.5in,%
footskip=.25in]{geometry}
\usepackage{multicol}
\begin{document}
	\begin{frontmatter}
		
		\title{A Note on the Finite Convergence of Alternating Projections}
		
		\author{Hoa T. Bui\fnref{myfootnote}}
		
		\fntext[myfootnote]{Corresponding author}
		\ead{hoa.bui@curtin.edu.au}
		\author{Ryan Loxton}	
		\ead{r.loxton@curtin.edu.au}
		\author{Asghar Moeini}
		\address{ARC Training Centre for Transforming Maintenance through Data Science, Curtin University, Australia
			}
		\address{School of Electrical Engineering, Computing and Mathematical Sciences,
			Curtin University, Australia}
		
		\begin{abstract}
			We establish sufficient conditions for finite convergence of the alternating projections method for two non-intersecting and potentially nonconvex sets. 
			Our results are based on a generalization of the concept of intrinsic transversality, which until now has been restricted to sets with nonempty intersection. 
			In the special case of a polyhedron and closed half space, our sufficient conditions define the minimum distance between the two sets that is required for alternating projections to converge in a single iteration.
		\end{abstract}
		
		\begin{keyword}
			Alternating projections \sep proximal normal cone \sep intrinsic transversality \sep finite convergence \sep polyhedrons
			\MSC[2010] 65K10 \sep 49J52 \sep 58E30 \sep 90C30
		\end{keyword}
		
	\end{frontmatter}
\begin{multicols}{2}

\section{Introduction}\label{intro}

\noindent Throughout this paper, $X$ is a Hilbert space with inner product $\ang{\cdot,\cdot}$ and associated norm $\norm{\cdot}$. 
The method of {\it alternating projections} for two nonempty sets $A,B\subset X$ involves iterating the following steps, starting with $x_0\in A$:
\begin{eqnarray*}
	&&y_n \in P_B(x_n),\\
	&&x_{n+1} \in P_A(y_n).
\end{eqnarray*}
Here, $P_B(x_n)$ is the set of all projections of $x_n$ onto $B$ and $P_A(y_n)$ is the set of all projections of $y_n$ onto $A$. 
The study of the convergence of this method in the \emph{consistent case} (i.e. $A\cap B\neq \emptyset$) has a long history that can be traced back to von Neumman; see \cite{KruTha16,DruIofLew15,LewLukMal09,NolRon16} for historical comments. 
In particular, for convex settings,  Bregman \cite{Bre65} showed that the method always converges,
and a \emph{linear convergence} rate was established by Gubin et al. \cite{GubPolRai67} and Bauschke
\& Borwein \cite{BauBor93}.
For nonconvex settings, conditions such as \emph{superregularity} and \emph{intrinsic transversality} can be imposed to ensure linear convergence (see the results by Dao et al. \cite{dao2018linear} and Drusvyatskiy et al. \cite{DruIofLew15}). 
Furthermore, Noll and Rondepierre  in \cite{NolRon16} studied a general setting that allows
for nonlinear convergence under more subtle nonlinear regularity assumptions. 

For the inconsistent case, when $A\cap B =\emptyset$, the method of alternating projections does not converge to a single point, but under certain conditions it will converge to a pair of points of minimum distance. 
For example,
Cheney and Goldstein showed in \cite{CheGol59} that in Euclidean spaces when the two sets are closed and convex, and one of the sets is compact, the method converges and attains the minimum distance between the two sets. 
In particular, this result holds for two polytopes. 
Because of this important convergence property, alternating projections for inconsistent cases has been widely applied \cite{GubPolRai67,Com94,ComBon99}; see also \cite{YairAnd15, censor2018algorithms} for reviews. 

There has been a handful of research papers aimed at establishing certain convergence rates for the alternating projections method in the inconsistent case (see for example \cite{drusvyatskiy2017note}), but almost all results consider convergence in the limit rather than finite convergence. To the best of our knowledge the only exception is a recent paper by Behling et al. \cite{behling2020infeasibility}, which considers \emph{finite convergence} for two non-intersecting closed convex sets satisfying some error bound conditions. Our work in this paper also aims at finding sufficient conditions to ensure finite convergence, but for the general nonconvex setting.

Our approach is to extend the concept of intrinsic transversality, first defined for consistent cases in \cite{DruIofLew15},
to the more general setting when the intersection can be empty or nonempty (Conditions 1 and 2 in Section~\ref{main result}). 
Under either condition, we show that the number of iterations depends on the distance between the two sets, the starting point and the maximum angle between vectors of type $(a-b)$, where $a\in A$ and $b\in B$, and the proximal normal cones $N_A^{\text{prox}}(a)$ and $N_B^{\text{prox}}(b)$.
In particular, these results are applicable when the two sets are a polyhedron and a closed half space.
\begin{figure*}
	\centering
	\subfigure[Linear convergence]{\makebox[4cm][c]{\includegraphics[scale=0.6,width=1in]{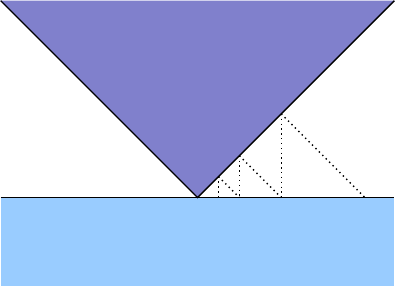}}}\quad\quad\quad\quad
	\subfigure[Finite convergence]{\makebox[4cm][c]{\includegraphics[scale=0.6,width=1in]{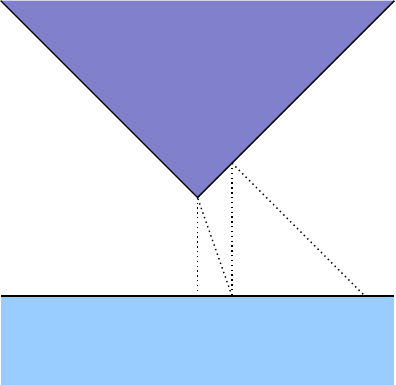}}}\quad\quad\quad\quad
	\subfigure[$1$ step convergence]{\makebox[4cm][c]{\includegraphics[scale=0.6,width=1in]{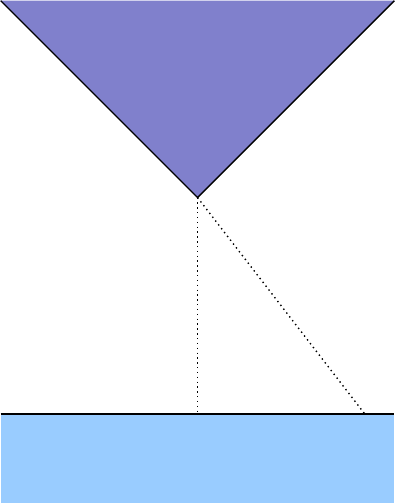}}}
	\caption{Convergence of the alternating projections method.}
	\label{Cn}
\end{figure*}

The alternating projections method can be used to solve linear programming problems. Indeed, a minimization linear program
can be formulated as finding the closest points of the following sets:
\begin{enumerate}
	\item[(1)] the problem's feasible region;  and
	\item[(2)] the closed half space containing all vectors  whose objective function value is strictly less than a specified lower bound for the objective over the feasible region.
\end{enumerate}

Finite convergence of the alternating projections method for this setting is guaranteed by our sufficient conditions.  This idea of solving linear programming problems using alternating projections was also independently considered in \cite{behling2020infeasibility}. However, our approach has some advantages. First, we can estimate the number of iterations required for convergence. Second, we can also determine the minimum distance (or lower bound), relative to the starting point that is needed to ensure convergence after one iteration (see Figure~\ref{Cn}).

The paper is organized as follows. In Section~\ref{Aux}, we recall some essential results that will be used in subsequent sections, and we provide a new proof for one of these results. Section~\ref{main result} contains our main results on the finite convergence of the alternating projections method. Finally, Section~\ref{Application}  explores the new finite convergence results for the special case of a polyhedron and a closed half space, which is related to linear programming.

\section{Preliminaries and auxiliary results} \label{Aux}
Let $\B$, $\B^*$ denote the open unit balls in the primal and dual spaces $X$, $X^*$ and furthermore let $B_\delta(x)$ and $\overline{B}_\delta(x)$ denote, respectively, the open and closed balls with center $x$ and radius $\delta>0$. We use $\R$ and $\R_+$ to denote the real line (with the usual norm) and the non-negative subset of the real line. We use the notation $\R_+(v)$ for the cone $\{\la v: \la\ge 0\}$ generated by a vector $v$ in $X$. 
The boundary and interior of a set $A$ are denoted as $\bd A$ and $\Int A$, respectively.
The distance from a point $x$ to a set $A$ is defined by $d(x,A):=\inf_{u \in A}\|u-x\|$, and we use the convention $d(x,\emptyset) = +\infty$.  The set of all projections of $x$ onto $A$ is 
$$
P_A(x):=\{a\in A: d(x,a)=d(x,A)\}.
$$
If $A$ is a closed subset of a finite dimensional space, then $P_A(x)\neq \emptyset$. 
Additionally, if $A$ is a closed convex set of an Euclidean space, then $P_A(x)$ is a singleton.
The identity $d\left(v/\norm{v},\R_+(u) \right) = d\left(u/\norm{u},\R_+(v) \right) $ for $u,v\in X\setminus\{0\}$ is used in subsequent sections. This assertion is proved as follows:
\begin{align*}
	\left[d\left(\frac{v}{\norm{v}},\R_+(u) \right)\right]^2 &= \min_{\la \ge 0} \norm{\frac{v}{\norm{v}}- \la u}^2\\
		& =  \min_{\la \ge 0} \left(1+\la^2\norm{u}^2-\frac{2\la}{\norm{v}}\ang{v,u} \right)\\
		& =\begin{cases}
		1- \frac{\ang{v,u}^2}{\norm{u}^2\norm{v}^2},& \text{if } \ang{u,v} >0,\\
		1,& \text{if } \ang{u,v} \le 0,
		\end{cases} 
\end{align*}
and similarly $$	\left[d\left(\frac{u}{\norm{u}},\R_+(v) \right)\right]^2 = \begin{cases}
	1- \frac{\ang{v,u}^2}{\norm{u}^2\norm{v}^2},& \text{if } \ang{u,v} >0,\\
	1,& \text{if } \ang{u,v} \le 0.
\end{cases} $$
As defined in \cite{Mor06.1}, the {\it proximal normal cone} to $A$ at $a\in A$ is:
\begin{align*}
N^{\text{prox}}_A(a)&:=\cone(P^{-1}_A(a)-a)\\
&=\left\{\lambda(x-a): \la \ge 0,a\in P_A(x) \right\}.
\end{align*}
For convenience, we will use the notation $N_A(a)$ instead of $N^{\text{prox}}_A(a)$ throughout. 
Observe that if $a\in P_A(x)$, then $x-a \in N_A(a)$. The proximal normal cone is related to the proximal subdifferential of a proper semicontinuous function $f$, denoted $\partial_P f$, or $\partial f$ for simplicity. Indeed, $\partial \mathbbm{1}_A(a) = N_A(a)$ for any $a\in A$, where here $\mathbbm{1}_A$ is the indicator function of the set $A$ defined by $\mathbbm{1}_A(x) = 0$ if $x\in A$ and $\mathbbm{1}_A(x) = +\infty$, otherwise. The proximal subdifferential satisfies the following fuzzy sum rule  (see \cite{Mor06.1}, page 240).
\begin{lemma}[Fuzzy sum rule]\label{fz}
Suppose $f_1$ is lower semicontinuous and
$f_2$ is Lipschitz continuous 
in a neighbourhood of $\bar x$.
Then, for any $x^*\in\partial(f_1+f_2) (\bar x)$ and $\varepsilon>0$, there exist $x_1,x_2\in X$ with $\|x_i-\bar x\|<\varepsilon$, $|f_i(x_i)-f_i(\bar x)|<\varepsilon$ $(i=1,2)$, such that
$
x^*\in \partial f_1(x_1) +\partial f_2(x_2)+ (\varepsilon\B^*).
$
\end{lemma}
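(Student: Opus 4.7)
My plan is to prove Lemma~\ref{fz} via the classical \emph{decoupling plus smooth variational principle} template, which works directly in the Hilbert space $X$ using quadratic perturbations. The argument will proceed in three conceptual steps: recast the hypothesis as a local minimization problem, decouple the two summands by lifting to a product space with a quadratic penalty term, and finally apply the Borwein--Preiss variational principle to split the resulting first-order condition.

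First, I would translate the proximal inclusion into a local minimum. From the definition of the proximal subdifferential, $x^*\in\partial(f_1+f_2)(\bar x)$ yields constants $\sigma\ge 0$ and $\eta>0$ such that
\begin{equation*}
	(f_1+f_2)(x)+\sigma\norm{x-\bar x}^2\ge (f_1+f_2)(\bar x)+\ang{x^*,x-\bar x}
\end{equation*}
for all $x\in B_\eta(\bar x)$; shrinking $\eta$, I may assume $\eta<\varepsilon$ and that $f_2$ is Lipschitz on $B_\eta(\bar x)$ with constant $L$. Equivalently, $\bar x$ locally minimizes $\varphi(x):= f_1(x)+f_2(x)-\ang{x^*,x}+\sigma\norm{x-\bar x}^2$. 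Next, for a small parameter $\la>0$, I would decouple on $\overline B_\eta(\bar x)\times\overline B_\eta(\bar x)$ by considering
\begin{equation*}
	\Phi_\la(x,y):= f_1(x)+f_2(y)-\ang{x^*,y}+\sigma\norm{y-\bar x}^2+\tfrac{1}{2\la}\norm{x-y}^2.
\end{equation*}
A standard estimate exploiting the Lipschitz continuity of $f_2$ gives $\inf\Phi_\la\ge\varphi(\bar x)-L^2\la/2$, while any near-minimizer $(x_\la,y_\la)$ must satisfy $\norm{x_\la-y_\la}=O(\sqrt{\la})$, so that taking $\la$ small enough forces $(x_\la,y_\la)$ into an arbitrarily small neighborhood of $(\bar x,\bar x)$, with $|f_2(y_\la)-f_2(\bar x)|$ small by Lipschitzness and $|f_1(x_\la)-f_1(\bar x)|$ small by sandwiching and lower semicontinuity.

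Then I would apply the Borwein--Preiss variational principle to $\Phi_\la$ on the product Hilbert space $X\times X$, producing a point $(x_1,x_2)$ at which $\Phi_\la+q$ attains a local minimum for some $C^{1,1}$ quadratic perturbation $q$ whose gradient is bounded in norm by $\varepsilon/2$ nearby. Applying Fermat's rule coordinate-wise --- using that the proximal sum rule is exact when one summand is $C^{1,1}$ --- I obtain
\begin{align*}
	\tfrac{1}{\la}(x_2-x_1)+r_1 &\in \partial f_1(x_1),\\
	x^*-2\sigma(x_2-\bar x)+\tfrac{1}{\la}(x_1-x_2)+r_2 &\in \partial f_2(x_2),
\end{align*}
with $\norm{r_i}\le\varepsilon/2$. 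Adding the two inclusions and absorbing the $2\sigma(x_2-\bar x)$ term (which is $o(1)$ as $\la\downarrow 0$) into the slack produces $x^*\in\partial f_1(x_1)+\partial f_2(x_2)+\varepsilon\B^*$.

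The main obstacle will be the \emph{bookkeeping}: the parameters must be calibrated in the correct order so that \emph{simultaneously} $\norm{x_i-\bar x}<\varepsilon$, $|f_i(x_i)-f_i(\bar x)|<\varepsilon$, and the slack in the subdifferential sum lies in $\varepsilon\B^*$. The order of choices is: first fix $\sigma,\eta$ from the hypothesis; then pick $\la$ small enough to control both the coupling penalty and the function-value deviations at the near-minimizer; and finally choose the Borwein--Preiss perturbation so that each $r_i$ has norm at most $\varepsilon/2$. Lower semicontinuity of $f_1$ is the essential ingredient preventing $f_1(x_1)$ from dropping far below $f_1(\bar x)$ after the perturbation, while Lipschitzness of $f_2$ is what makes the quadratic coupling penalty efficient.
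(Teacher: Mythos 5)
First, a point of reference: the paper does not prove Lemma~\ref{fz} at all --- it is quoted from Mordukhovich's book (\cite{Mor06.1}, p.~240) as a known tool, so any proof you give is by construction ``different from the paper''. Your template (recast the proximal inclusion as a local minimum, decouple via a quadratic coupling penalty on $X\times X$, apply Borwein--Preiss, split by Fermat's rule using exactness of the sum rule for $C^{1,1}$ terms and separability of $f_1(x)+f_2(y)$, then add the two inclusions so the coupling gradients cancel) is indeed the standard and correct route to fuzzy sum rules in Hilbert space, and most of the bookkeeping you describe (shrink $\eta<\varepsilon$, use Lipschitzness for $|f_2(x_2)-f_2(\bar x)|$, lower semicontinuity plus the near-minimization inequality to sandwich $f_1(x_1)$) is sound.

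There is, however, one genuine gap: the localization step. From $\Phi_\la(x_\la,y_\la)\le\varphi(\bar x)+\la$ you correctly get $\norm{x_\la-y_\la}=O(\sqrt\la)$ (note this also needs $f_1$ bounded below on $\overline B_\eta(\bar x)$, which does not follow from lower semicontinuity alone in infinite dimensions but does follow from the proximal inequality together with the Lipschitz bound on $f_2$), but the inference ``hence $(x_\la,y_\la)$ lies in an arbitrarily small neighbourhood of $(\bar x,\bar x)$'' is a non sequitur: if $\sigma=0$ and $\varphi$ is constant on the ball, near-minimizers of $\Phi_\la$ can sit anywhere in $\overline B_\eta(\bar x)\times\overline B_\eta(\bar x)$, including at the boundary. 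This matters twice. It is exactly what you need to guarantee that the Borwein--Preiss point is an \emph{interior} point of the product ball, so that the perturbed minimum is an unconstrained local minimum and Fermat's rule applies; and it is what your ``$2\sigma(x_2-\bar x)=o(1)$ as $\la\downarrow 0$'' claim rests on. The standard repair is to strictify the minimum: since the proximal inequality remains valid when $\sigma$ is replaced by $\sigma+1$, you may assume $\varphi(x)\ge\varphi(\bar x)+\norm{x-\bar x}^2$ on $B_\eta(\bar x)$, and then the decoupling estimate forces near-minimizers of $\Phi_\la$ into a neighbourhood of $(\bar x,\bar x)$ shrinking with $\la$, after which interiority, the absorption of $2\sigma(x_2-\bar x)$ (alternatively, just bound it by $2(\sigma+1)\eta$ and shrink $\eta$ accordingly), and the function-value estimates all go through. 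Two smaller corrections: the lower bound $\inf\Phi_\la\ge\varphi(\bar x)-L^2\la/2$ should use the Lipschitz constant of the whole $y$-block $f_2(\cdot)-\ang{x^*,\cdot}+\sigma\norm{\cdot-\bar x}^2$, i.e.\ $L+\norm{x^*}+2\sigma\eta$ rather than $L$; and when splitting the Fermat inclusion you should say explicitly that the proximal subdifferential of the separable sum $f_1(x)+f_2(y)$ is the product $\partial f_1(x_1)\times\partial f_2(x_2)$ (which is true and easy, by freezing one coordinate), since $f_2$ itself is only Lipschitz, not $C^{1,1}$.
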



The following result has been proved in \cite{DruIofLew15} and here we give an alternative proof. Our proof consists of two key ingredients: (1) Ekeland's Variational Principle \cite{Eke79}, and (2) the fuzzy sum rule in Lemma~\ref{fz}. 

\begin{theorem}[Distance decrease]\cite[Theorem 5.2]{DruIofLew15}
\label{sl}	
Consider a Hilbert space $X$, a closed set $A$, and points $a\in A$, $b\notin A$ with $\rho:=\norm{a-b}$ and $\al >0$. If there is $\delta>0$ such that 
	\begin{equation}\label{sl1}
	\inf\left\{d\left(\frac{b-x}{\norm{b-x}},N_A(x)\right): x\in B_\rho(b)\cap B_\delta(a) \cap A \right\}\ge\al,
	\end{equation}
	then $d(b,A)\le \norm{a-b}-\al\delta$.
\end{theorem}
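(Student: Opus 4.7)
I would argue by contradiction, invoking Ekeland's Variational Principle and then the fuzzy sum rule of Lemma~\ref{fz}. Suppose $d(b,A) > \rho - \al\delta$. The natural functional to optimize is
\[
f(x) := \norm{b - x} + \mathbbm{1}_A(x),
\]
which is proper, lower semicontinuous, bounded below, with $f(a) = \rho$ and $\inf f = d(b,A)$, so the slack $f(a) - \inf f$ is strictly less than $\al\delta$. This slack is what I transfer, via Ekeland, into a useful subdifferential bound: I pick Ekeland parameters $\varepsilon \in [f(a)-\inf f, \al\delta)$ and $\la \in (\varepsilon/\al,\delta)$ so that simultaneously $\varepsilon/\la < \al$ and $\la < \delta$. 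Ekeland's principle then produces $\bar x \in A$ with $\norm{\bar x - a} \le \la$ and
\[
\norm{b - \bar x} + \frac{\varepsilon}{\la}\norm{\bar x - a} \le \rho,
\]
which is a global minimizer of $g(x) := f(x) + (\varepsilon/\la)\norm{x - \bar x}$. The sharp Ekeland inequality forces $\norm{b - \bar x} < \rho$ whenever $\bar x \neq a$, placing $\bar x$ in the open set $B_\rho(b) \cap B_\delta(a) \cap A$.

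Next, I convert $0 \in \partial g(\bar x)$ into information about $N_A$. Split $g = \mathbbm{1}_A + f_2$ with $f_2(x) := \norm{b - x} + (\varepsilon/\la)\norm{x - \bar x}$; this $f_2$ is Lipschitz on a neighborhood of $\bar x$, since $\bar x \in A$ while $b \notin A$ implies $\bar x \neq b$. Applying Lemma~\ref{fz} with accuracy $\nu > 0$, and using $\partial\mathbbm{1}_A = N_A$ together with the standard subdifferential of $f_2$ at the nonzero point $b - x_2$ (smooth gradient $-(b-x_2)/\norm{b-x_2}$ plus a term of dual norm at most $\varepsilon/\la$), yields $x_1 \in A$ and $x_2 \in X$ within $\nu$ of $\bar x$ with
\[
\frac{b - x_2}{\norm{b - x_2}} \,\in\, N_A(x_1) + \frac{\varepsilon}{\la}\,\overline{\B^*} + \nu\,\B^*.
\]
Letting $\nu \downarrow 0$, openness of $B_\rho(b) \cap B_\delta(a)$ around the interior point $\bar x$ places $x_1$ in this set for small $\nu$, while continuity of $v \mapsto v/\norm{v}$ at the nonzero vector $b - \bar x$ makes the gap between $(b - x_1)/\norm{b - x_1}$ and $(b - x_2)/\norm{b - x_2}$ arbitrarily small. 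The triangle inequality then delivers
\[
d\!\left(\frac{b - x_1}{\norm{b - x_1}},\, N_A(x_1)\right) \,\le\, \frac{\varepsilon}{\la} + \nu + \left\|\frac{b - x_1}{\norm{b - x_1}} - \frac{b - x_2}{\norm{b - x_2}}\right\| \,<\, \al,
\]
with $x_1 \in B_\rho(b) \cap B_\delta(a) \cap A$, contradicting \eqref{sl1}.

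The main technical obstacle is ensuring that $\bar x$ sits in the \emph{open} ball $B_\rho(b)$: Ekeland only guarantees $\norm{b - \bar x} \le \rho$, and the strict inequality fails precisely in the degenerate situation $\bar x = a$. The sharp Ekeland inequality disposes of the generic case $\bar x \neq a$; the borderline case $\bar x = a$ must be treated separately, for instance by a small perturbation of the initial point inside $A$ or by an approximation argument allowing a closed-ball version of \eqref{sl1} to be used in its place. Once this technicality is dispatched, the contradiction above gives $d(b,A) \le \rho - \al\delta$, as required.
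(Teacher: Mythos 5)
Your proposal follows essentially the same route as the paper's proof: argue by contradiction, run Ekeland's Variational Principle on the distance-to-$b$ function restricted to $A$ (you absorb $\mathbbm{1}_A$ into the objective, the paper restricts to $A$ --- equivalent), then combine the fuzzy sum rule of Lemma~\ref{fz} with smoothness of $\norm{\cdot-b}$ away from $b$ to produce a point of $A$ inside $B_\rho(b)\cap B_\delta(a)$ at which the normalized direction to $b$ is within distance less than $\alpha$ of the proximal normal cone, contradicting \eqref{sl1}. The only substantive difference is bookkeeping: the paper fixes $\alpha'\in(0,\alpha)$, splits the sum as $\bigl(\norm{\cdot-b}+\mathbbm{1}_A\bigr)+\alpha'\norm{\cdot-x_0}$ and evaluates $\partial\bigl(\norm{\cdot-b}+\mathbbm{1}_A\bigr)$ exactly at the nearby point, whereas you split as $\mathbbm{1}_A+\bigl(\norm{b-\cdot}+\tfrac{\varepsilon}{\lambda}\norm{\cdot-\bar x}\bigr)$ and let the fuzzy accuracy $\nu\downarrow 0$, transferring the estimate from $x_2$ to $x_1$ by continuity of $v\mapsto v/\norm{v}$ at $b-\bar x\neq 0$. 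Both versions work, and your parameter choice $\varepsilon/\lambda<\alpha$, $\lambda<\delta$ plays exactly the role of the paper's $\alpha'$.

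The borderline case you flag ($\bar x=a$, so that $\bar x$ fails to land in the open ball $B_\rho(b)$) is a genuine soft spot, but be aware that it cannot be ``dispatched'' by perturbing the initial point or by an approximation argument, because it is precisely where the statement as written (with open balls) fails: if $A\cap B_\rho(b)\cap B_\delta(a)=\emptyset$, hypothesis \eqref{sl1} holds vacuously while the conclusion can be false. Concretely, in $X=\R$ take $b=0$, $a=1$, $A=\{1,\,-0.9\}$, $\alpha=\delta=1/2$: the infimum in \eqref{sl1} is over the empty set, yet $d(b,A)=0.9>\norm{a-b}-\alpha\delta=0.75$; moreover any Ekeland point with $\lambda<\delta$ must be $a$ itself, so no variant of the argument can proceed. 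The paper's own proof is no better off here: it simply asserts the strict inequality \eqref{1}, $f(x_0)<f(a)$, which Ekeland does not supply when $a$ itself satisfies the Ekeland minimality condition (exactly the situation above). So your write-up is on par with the paper's: it is complete whenever some point of $A\cap B_\delta(a)$ is strictly closer to $b$ than $a$ is (then $a$ violates the Ekeland minimality condition, forcing $\bar x\neq a$ and hence $\norm{b-\bar x}<\rho$), and it is necessarily stuck otherwise --- a defect of the open-ball formulation rather than of your argument. Your suggestion of a closed-ball version of \eqref{sl1} does remove the counterexample (there $a$ becomes admissible and the hypothesis fails at $a$), but note that it proves a different statement and still needs care in the last step, since the fuzzy points produced near a boundary Ekeland point may exit the closed ball $\overline{B}_\rho(b)$.
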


\begin{proof}
	Consider the function $f(x)= \norm{x-b}$ and suppose to the contrary that \eqref{sl1} holds but
	$
	d(b,A)> \norm{a-b} -\al\de
	$.
	Take $\al'\in (0,\al)$ such that $d(b,A)> \norm{a-b} -\al'\de$.
	This is equivalent to $\inf_{x\in A} f(x) > f(a) - \al'\de$. 
	By Ekeland's Variational Principle, there is a vector $x_0\in A\cap B_\de(a)$ such that
	\begin{align}
	\label{1}
		f(x_0) &< f(a),\\
	\label{1.1}
		f(x_0)    &\le f(x) +\al'\norm{x-x_0},\quad \forall x \in A.
	\end{align}
	Due to \eqref{1}, $\norm{x_0-b}<\norm{a-b} =\rho$, or $x_0\in A\cap B_\rho(b)$. By \eqref{1.1}, it follows that $x_0$ is a global minimizer of the sum function $f(x)+\al'\norm{x-x_0}+\mathbbm{1}_A(x)$. Thus, 
	$$
	0 \in \partial\left(f(x)+\mathbbm{1}_A(x)+\al'\norm{x-x_0}\right)\mid_{x=x_0}.
	$$
	Take $\epsilon>0$ such that $$\epsilon<\min\left\{\al - \al', \rho-\norm{x_0-b}, \de - \norm{x_0-a}\right\}.$$
	By the fuzzy sum rule applied at $x_0$ for the functions $f(x)+\mathbbm{1}_A(x)$ and $\al'\norm{x-x_0}$, there exist $\bx,x' \in B_\epsilon(x_0)$ such that $\abs{f(\bx)+\mathbbm{1}_A(\bx) - f(x_0)-\mathbbm{1}_A(x_0)}<\epsilon$ and
	\begin{align*}
	0 &\in \partial (f(x) + \mathbbm{1}_A(x))\mid_{x=\bx}+ \partial(\al'\norm{x-x_0})\mid_{x=x'} +\epsilon \B^*\\
	 & \subset \partial (f(x) + \mathbbm{1}_A(x))\mid_{x=\bx}+ \al'\B^* +\epsilon \B^\ast \\
	 &\subset \partial (f(x) + \mathbbm{1}_A(x))\mid_{x=\bx} + \al \B^\ast.
	\end{align*}
	Therefore, $\bx \in A\cap B_\epsilon(x_0)$ and $\partial (f(x) + \mathbbm{1}_A(x))\mid_{x=\bx}\cap (\al \B^*) \neq \emptyset$.
	On the other hand, since $X$ is a Hilbert space and 
	$$f(\bx)= \norm{\bx-b} \ge d(b,A)>0,$$
	the function $f$ is differentiable at $\bx$ and  $\nabla f(\bx) = \frac{\bx-b}{\norm{b-\bx}}$. 
	Thus,
	$$\partial (f(x) + \mathbbm{1}_A(x))\mid_{x=\bx} = \frac{\bx-b}{\norm{b-\bx}}+ N_A(\bx).$$ 
	Recall that $\left(\frac{\bx-b}{\norm{b-\bx}}+ N_A(\bx)\right)\cap (\al\B^\ast) \neq \emptyset$, or
	$$
	d\left(\frac{b-\bx}{\norm{b-\bx}},N_A(\bx)\right)< \al.
	$$
	Furthermore, by the choice of $\epsilon$, $\bx \in A\cap B_\rho(b)$ and $$\norm{\bx - a }\le \norm{\bx - x_0}+\norm{x_0 - a}<\epsilon + \norm{x_0-a}<\delta,$$ 
	which shows that $\bx \in B_\de(a)$, and hence the previous inequality contradicts \eqref{sl1}.
\end{proof}

The next proposition is a supplementary result that provides a characterization for two points in disjoint convex sets that are of minimum distance apart.

\begin{proposition}
\label{c-con}
Consider two closed convex subsets $A,B$ of a Hilbert space $X$ with $d(A,B)>0$ and $a \in A, b\in B$. Then $\norm{a-b} =d(A,B)$ if and only if
	\begin{equation}\label{qual-1}
		(b-a) \in N_A(a),\AND (a-b) \in N_B(b).
	\end{equation}
\end{proposition}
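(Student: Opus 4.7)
The plan is to prove the two directions separately, with the forward direction being almost immediate from the definitions, and the reverse direction exploiting convexity to turn the proximal normal cone memberships into a separating-hyperplane statement.

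For the forward direction, suppose $\norm{a-b}=d(A,B)$. Since $a\in A$, we have $d(b,A)\le\norm{a-b}=d(A,B)\le d(b,A)$, hence $d(b,A)=\norm{a-b}$, so $a\in P_A(b)$. By the definition of the proximal normal cone (with $\lambda=1$), this gives $b-a\in N_A(a)$. The symmetric argument yields $b\in P_B(a)$ and hence $a-b\in N_B(b)$.

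For the reverse direction, I would first observe that for a closed convex set the proximal normal cone coincides with the convex normal cone: if $b-a=\lambda(x-a)$ for some $\lambda\ge 0$ and $a\in P_A(x)$, then the standard variational inequality for projection onto a convex set gives $\langle x-a,u-a\rangle\le 0$ for all $u\in A$, and scaling produces $\langle b-a,u-a\rangle\le 0$ for all $u\in A$. Symmetrically, $\langle a-b,v-b\rangle\le 0$ for all $v\in B$. Rewriting these as $\langle b-a,u\rangle\le\langle b-a,a\rangle$ and $\langle b-a,v\rangle\ge\langle b-a,b\rangle$, and using $\langle b-a,b\rangle-\langle b-a,a\rangle=\norm{b-a}^2>0$, I would subtract to obtain
\begin{equation*}
\langle b-a,\,v-u\rangle \;\ge\; \norm{b-a}^2 \quad\text{for all } u\in A,\ v\in B.
\end{equation*}
Cauchy--Schwarz then yields $\norm{v-u}\cdot\norm{b-a}\ge\norm{b-a}^2$, i.e. $\norm{v-u}\ge\norm{b-a}$, so $d(A,B)\ge\norm{a-b}$, and the reverse inequality is trivial.

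The only delicate point I anticipate is the passage from the proximal normal cone to the convex normal cone: one must handle the case $\lambda=0$ (which is ruled out by $d(A,B)>0$ forcing $b\ne a$ and $a\ne b$) and must invoke the standard projection characterization for convex sets in a Hilbert space. Once that identification is in place, the rest is a clean separating-hyperplane computation followed by Cauchy--Schwarz.
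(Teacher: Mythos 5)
Your proof is correct, and your treatment of the sufficiency direction is genuinely different from the paper's. The necessity direction is identical: $\norm{a-b}=d(A,B)$ forces $a\in P_A(b)$ and $b\in P_B(a)$, and the proximal normal cone memberships follow from the definition. For sufficiency, the paper works with the convex function $f(x,y)=\norm{x-y}+\mathbbm{1}_{A\times B}(x,y)$ on $X\times X$, characterizes the minimum-distance pairs as the global minimizers of $f$, i.e.\ $0\in\partial f(a,b)$, and then uses the subdifferential sum rule together with the differentiability of the norm away from the origin (valid since $a\neq b$) to identify this inclusion with \eqref{qual-1}. You instead pass from the proximal normal cone to the convex normal cone via the variational characterization of projections onto closed convex sets in Hilbert space (correctly noting that $d(A,B)>0$ rules out the degenerate $\lambda=0$ case, and only the inclusion $N_A^{\text{prox}}(a)\subset\{w:\ang{w,u-a}\le 0\ \forall u\in A\}$ is needed), and then combine the two inequalities $\ang{b-a,u-a}\le 0$ and $\ang{b-a,v-b}\ge 0$ to get $\ang{b-a,v-u}\ge\norm{b-a}^2$ for all $u\in A$, $v\in B$, finishing with Cauchy--Schwarz. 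Your route is more elementary and self-contained: it avoids subdifferential calculus altogether (in particular, it does not rely on the exactness of the sum rule $\partial(\norm{\cdot-\cdot}+\mathbbm{1}_{A\times B})=\partial\norm{\cdot-\cdot}+N_{A\times B}$, which needs a qualification condition such as continuity of the norm term), and it makes explicit where convexity enters. The paper's argument is shorter given its machinery and is consistent with the variational-analysis toolkit used throughout (indicator functions, subdifferentials, sum rules), which is presumably why the authors chose it; both proofs establish exactly the same statement.
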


\begin{proof}
		If $\norm{a-b} = d(A,B)>0$ with $a\in A$, $b\in B$, then $a \in P_A(b)$ and $b \in P_B(a)$. By the definition of the proximal normal cones, $b-a \in N_A(a)$ and $a-b \in N_B(b)$.
		
		Consider the distance function restricted to the sets $A$ and $B$ defined by $f(x,y) := \norm{x-y}+ \mathbbm{1}_{A\times B}(x,y)$, where $\mathbbm{1}_{A\times B}$ is the indicator function of the set $A\times B$. The product space $X\times X$ is equipped with the usual $L_2$ norm. 
		When the two sets $A,B$ are convex, the function $f$ is convex. The pair $(a,b)$ is the global minimizer of $f$, or equivalently a pair of shortest distance between $A$ and $B$, if and only if
		\begin{align*}
		0 \in \partial f(a,b) &= \partial \left(\norm{x-y}\right)\mid_{(x,y) = (a,b)} + N_{A\times B}(a,b) \\
		&= \partial \left(\norm{x-y}\right)\mid_{(x,y) = (a,b)} +N_A(a)\times N_B(b).
		\end{align*}
		Since $A\cap B =\emptyset$, then $a-b \neq 0$ and
		\begin{equation*}
		\partial (\norm{x-y})\mid_{(x,y)=(a,b)} =\left\{\left(\frac{a-b}{\norm{a-b}} , \frac{b-a}{\norm{b-a}} \right)\right\}.
		\end{equation*}
	The inclusion $0\in \partial f(a,b)$ is equivalent to \eqref{qual-1}.
\end{proof}

Note that in nonconvex settings inclusion \eqref{qual-1} is a necessary but not sufficient condition for $\norm{a-b} = d(A,B)$. 

The following definition was introduced in \cite{DruIofLew15}. Our convergence results rely on a modification of this definition to the inconsistent setting.

\begin{definition}[Intrinsic transversality]\cite[Definition 3.1]{DruIofLew15}
	\label{DEF}
	Given two closed sets $A,B$ of a Hilbert space $X$, $\bx\in A\cap B$, we say that $\{A,B\}$ is intrinsically transversal at $\bx$ with degree $\al\in (0,1)$ if there is $\rho>0$ such that for all $x\in (A\setminus B)\cap B_\rho(\bx), y\in (B\setminus A)\cap B_\rho(\bx)$, we have
	\begin{equation}\label{def2}
		\max\left\{d\left(\frac{x-y}{\norm{x-y}},N_B(y)\right),d\left(\frac{y-x}{\norm{x-y}},N_A(x)\right)\right\}\ge \al.
	\end{equation}
\end{definition}


The key result linking intrinsic transversality with linear convergence of the alternating projections method is restated from \cite{DruIofLew15} below.

\begin{theorem}[Linear convergence]\cite[Theorem 6.1]{DruIofLew15}
	\label{LC-Dru}
	If two closed sets $A,B$ of an Euclidean space $X$ are intrinsically transversal at a point $\bx \in A\cap B$, with degree $\al >0$, then, for any constant $c$ in the interval $(0,\al)$ the method of alternating projections, initiated sufficiently near $\bx$, converges to a point in the intersection $A\cap B$ with linear rate $1-c^2$.
\end{theorem}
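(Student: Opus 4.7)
The plan is to iteratively apply the distance-decrease result (Theorem~\ref{sl}) to the set $A$ at each step $y_n \to x_{n+1}$, extracting a geometric decay for $r_n := \|x_n - y_n\|$ from intrinsic transversality. Fix $c \in (0, \alpha)$ together with an intermediate value $c' \in (c, \alpha)$. By Definition~\ref{DEF}, there exists $\rho_0 > 0$ such that the transversality inequality \eqref{def2} holds for every pair $(x, y) \in (A \setminus B) \times (B \setminus A)$ with both points in $B_{\rho_0}(\bar x)$. I would then argue by induction that starting from $x_0 \in A$ sufficiently close to $\bar x$, all iterates $x_n, y_n$ remain in $B_{\rho_0}(\bar x)$; this follows once the geometric decay of $r_n$ is in hand, via summability together with the telescoping estimate $\|x_{n+1} - x_n\| \le 2 r_n$.

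For the one-step decrease, apply Theorem~\ref{sl} to the closed set $A$ with $a = x_n$, $b = y_n$, so that the theorem's $\rho$ equals $r_n$. The task is to verify the uniform lower bound $d((y_n - x)/\|y_n - x\|, N_A(x)) \ge c$ for every $x \in B_{r_n}(y_n) \cap B_\delta(x_n) \cap A$, for a suitable $\delta$. Intrinsic transversality at the pair $(x, y_n)$ forces either this quantity or $d((x - y_n)/\|x - y_n\|, N_B(y_n))$ to be at least $\alpha$. Since $(x_n - y_n)/r_n \in N_B(y_n)$, a standard Lipschitz-type estimate for unit normalizations yields
\begin{equation*}
\left\|\frac{x - y_n}{\|x - y_n\|} - \frac{x_n - y_n}{r_n}\right\| \le \frac{2\|x - x_n\|}{r_n} \le \frac{2\delta}{r_n},
\end{equation*}
so choosing $\delta$ as a small fraction of $r_n$ (with the fraction controlled by the slack $\alpha - c'$) rules out the first alternative and leaves the desired bound with constant $c' > c$. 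Theorem~\ref{sl} then yields $\|x_{n+1} - y_n\| = d(y_n, A) \le r_n - c\delta$, and since $y_n \in B$, we also have $r_{n+1} \le \|x_{n+1} - y_n\|$. Fine-tuning $\delta = \beta r_n$ against the slacks $c < c' < \alpha$ converts this additive decrease into the multiplicative bound $r_{n+1} \le (1 - c^2) r_n$.

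The hard part will be the precise parameter balance needed to recover exactly the rate $1 - c^2$: the factor $c\beta$ produced by Theorem~\ref{sl} must be at least $c^2$, while at the same time $\beta$ must be small enough (in terms of $\alpha - c'$) for the normalization estimate above to force the $N_B$-term in the transversality inequality to be subdominant. Once the geometric decay $r_n \le (1-c^2)^n r_0$ is secured, the Cauchy property of $\{x_n\}$ and $\{y_n\}$ and convergence to a common limit in $A \cap B$ follow routinely from $\|x_{n+1} - x_n\| \le 2 r_n$ and the completeness of $X$.
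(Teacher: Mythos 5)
Your architecture is the right one, and in fact it is the same route by which the paper subsumes this theorem: the paper never reproves \cite[Theorem 6.1]{DruIofLew15} directly, but derives it from the distance-decrease result (Theorem~\ref{sl}) via Lemma~\ref{non-int2} and Theorem~\ref{lin-con}(ii), using exactly your dichotomy — since $x_n-y_n\in N_B(y_n)$, the $N_B$-term in \eqref{def2} is small for test points $z$ near $x_n$, so intrinsic transversality forces the $N_A$-term to be at least $\al$. The genuine gap is quantitative and sits precisely where you flag ``the hard part'': your normalization estimate carries a factor $2$, so ruling out the $N_B$-alternative requires $2\de/r_n<\al$, i.e.\ $\de=\beta r_n$ with $\beta<\al/2$, while Theorem~\ref{sl} then yields a per-step decrease of at most $\al\de<\tfrac{1}{2}\al^2 r_n$ (and only $c\de$ with your bookkeeping). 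The two requirements $\al\beta\ge c^2$ and $\beta<\al/2$ are incompatible once $c>\al/\sqrt{2}$ (with $c\de$, once $c\ge\al/2$), so no fine-tuning of $\beta$ against $c<c'<\al$ can deliver the rate $1-c^2$ on the whole interval $c\in(0,\al)$; the proof as sketched only proves the theorem for $c$ bounded away from $\al$.

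The fix is the sharper estimate the paper sets up in Section~\ref{Aux} and uses in Lemma~\ref{non-int2}: by the symmetry identity $d\left(v/\norm{v},\R_+(u)\right)=d\left(u/\norm{u},\R_+(v)\right)$, for $z\in A\cap B_\de(x_n)$ one has
\begin{align*}
d\left(\frac{z-y_n}{\norm{z-y_n}},N_B(y_n)\right)
&\le d\left(\frac{z-y_n}{\norm{z-y_n}},\R_+(x_n-y_n)\right)\\
&= d\left(\frac{x_n-y_n}{r_n},\R_+(z-y_n)\right)\\
&\le \norm{\frac{x_n-y_n}{r_n}-\frac{z-y_n}{r_n}}
=\frac{\norm{z-x_n}}{r_n},
\end{align*}
with no factor $2$, because one compares with the non-unit point $(z-y_n)/r_n$ of the ray. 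This permits the choice $\de=\al r_n$, still keeps the $N_B$-term strictly below $\al$, and gives $d(y_n,A)\le(1-\al^2)r_n$, which dominates $1-c^2$ for every $c\in(0,\al)$. Two smaller points to repair as well: before invoking \eqref{def2} you must check that the test points satisfy $z\in A\setminus B$ (as in Lemma~\ref{non-int2}, $d(z,B)\ge r_n-\de>0$), and your ``iterates stay in $B_{\rho_0}(\bx)$'' induction must also place the intermediate points $z\in B_\de(x_n)$ inside the transversality neighbourhood; this is why Theorem~\ref{lin-con} runs the locality argument and the one-step estimate simultaneously with the explicit radius $\kappa=\al^2\rho/(2\al^2+2)$, rather than deducing locality afterwards from summability.
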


\section{Convergence results}
\label{main result}
We extend the definition of intrinsic transversality in Definition~\ref{DEF} to more general frameworks without the assumption $A\cap B \neq\emptyset$, removing the need for $\bx$ and its local neighbourhood $B_\rho(\bx)$. 
Condition 1 below is a global condition that requires \eqref{def2} to hold across the  entire sets $A,B$.
Condition 1' is a weaker condition that only requires \eqref{def2} to hold in certain neighbourhoods around two points in $A,B$ that are of minimum distance apart.  

\begin{subcondition}[1]
	\label{def1}
	Given two closed sets $A,B$ of a Hilbert space $X$, and $\al \in (0,1)$, inequality \eqref{def2} holds for all $x\in A\setminus B, y\in B$ with $d(y,A)>d(A,B)$.
\end{subcondition}

\begin{subcondition}[1']
Given two closed sets $A,B$ of a Hilbert space $X$, $a\in A$, $b\in B$ such that $\norm{a-b}=d(A,B) =d$, and $\al \in (0,1)$, there exists $\rho>0$ such that inequality \eqref{def2} holds for all $x\in (A\setminus B)\cap B_{2d+\rho}(b)$ and  $y\in (B\setminus A)\cap B_{2d+\rho}(a)$ with $d(y,A)>d(A,B)$.
\end{subcondition}
\begin{remark}
	
If $A\cap B\neq \emptyset$, then $d(A,B)=0$ and Condition~1' reduces to Definition~\ref{DEF} and Condition~1 reduces to the following condition.

\begin{subcondition}[1'']
	Given two closed sets $A,B$ of a Hilbert space $X$, $A\cap B\ne \emptyset$, and a constant $\al \in (0,1)$, inequality \eqref{def2} holds for all $x\in A\setminus B, y\in B\setminus A$.
\end{subcondition}

\noindent Condition 1'' is an extension of Definition~\ref{DEF} to the global framework. Indeed, under Condition 1'', the pair $\{A,B\}$ is intrinsically transversal at any $\bx\in A\cap B$.
\end{remark}

We will show later in this section that under Conditions 1 or 1', when $A\cap B=\emptyset$, the method of alternating projections converges after a finite number of steps. 
To do this, we need the following key result.

\begin{lemma}\label{non-int2}
	Consider two closed subsets $A,B$ of a Hilbert space $X$, $x\in A\setminus B$ and $y \in P_B(x)$ satisfying $d(y,A)>d(A,B)$, and $\al \in (0,1)$, $\de := \al\norm{x-y}$. Suppose that the following inequality holds for all $z\in A\cap B_\delta(x)\setminus B$:
	\begin{equation}\label{def2z}
		\max\left\{d\left(\frac{z-y}{\norm{z-y}},N_B(y)\right),d\left(\frac{y-z}{\norm{z-y}},N_A(z)\right)\right\}\ge \al.
	\end{equation}
	 Then,
	\begin{equation}\label{k2}
	d(y,A)\le (1-\al^2)\norm{x-y}. 
	\end{equation}
\end{lemma}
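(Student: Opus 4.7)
The strategy is to reduce \eqref{k2} to a direct application of Theorem~\ref{sl} (distance decrease) with $a\leftarrow x$, $b\leftarrow y$, $\rho=\norm{x-y}$, and the choice $\de=\al\norm{x-y}$. The hypotheses $x\in A$ and $y\notin A$ both hold (the latter because $d(y,A)>d(A,B)\ge 0$), and the theorem's conclusion then reads $d(y,A)\le\norm{x-y}-\al(\al\norm{x-y})=(1-\al^2)\norm{x-y}$, which is exactly \eqref{k2}. Everything therefore comes down to certifying the infimum condition \eqref{sl1}, and for this it is enough to prove the pointwise bound
\[
d\left(\frac{y-z}{\norm{y-z}},N_A(z)\right)\ge\al
\]
on the (possibly larger) set $A\cap B_{\al\norm{x-y}}(x)$, since dropping the additional constraint $z\in B_{\norm{x-y}}(y)$ present in \eqref{sl1} can only decrease the infimum.

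My first move is to rule out $z\in B$. Because $\al<1$ and $y\in P_B(x)$, any such $z$ would satisfy $\norm{z-x}<\al\norm{x-y}<\norm{x-y}=d(x,B)$, which is incompatible with $z\in B$. Hence every candidate $z$ lies in $A\cap B_\de(x)\setminus B$ and the standing hypothesis \eqref{def2z} applies: at least one of the two terms inside its max reaches $\al$. To push the bound onto the second term -- which is exactly the inequality we want -- it suffices to show that the \emph{first} term is always strictly below $\al$.

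This last point is the main obstacle. Writing $u:=x-y$ and $v:=z-y$ (both nonzero since $y\notin A$), the constraint $\norm{u-v}=\norm{x-z}<\al\norm{u}$ expanded via the cosine identity gives $2\ang{u,v}>(1-\al^2)\norm{u}^2+\norm{v}^2$, and an AM-GM step on the right-hand side yields $\ang{u,v}>\sqrt{1-\al^2}\,\norm{u}\norm{v}$. Substituting into the closed-form expression for $d(v/\norm{v},\R_+(u))$ recorded in Section~\ref{Aux} produces $d(v/\norm{v},\R_+(u))<\al$. Since $u=x-y\in N_B(y)$ follows from $y\in P_B(x)$, enlarging the ray $\R_+(u)$ to the cone $N_B(y)$ can only decrease the distance, whence $d(v/\norm{v},N_B(y))<\al$ as required. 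Combining the three steps, Theorem~\ref{sl} delivers \eqref{k2}; the trigonometric/AM-GM estimate is the only non-routine work.
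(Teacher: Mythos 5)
Your proof is correct and follows essentially the same route as the paper's: both verify that on $A\cap B_\de(x)$ (shown to be disjoint from $B$) the $N_B(y)$-term in \eqref{def2z} is strictly less than $\al$, so the $N_A(z)$-term is at least $\al$, and then apply Theorem~\ref{sl} with $a=x$, $b=y$, $\rho=\norm{x-y}$, $\de=\al\norm{x-y}$ to obtain \eqref{k2}. The only difference is in how the bound $d\bigl(\tfrac{z-y}{\norm{z-y}},N_B(y)\bigr)<\al$ is obtained: you use the closed-form distance-to-ray formula together with an AM--GM estimate, whereas the paper uses the symmetry identity $d(v/\norm{v},\R_+(u))=d(u/\norm{u},\R_+(v))$ and the test point $\tfrac{z-y}{\norm{x-y}}$ to get $\norm{z-x}/\norm{x-y}<\al$ directly.
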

\begin{proof}
Take $x\in A\setminus B$ and $y\in P_B(x)$ such that $d(y,A)>d(A,B)$.
We have $x\neq y$.
Let $\rho:=\norm{x-y}>0$ and $\delta:=\al\norm{x-y} >0$.
For any $z\in A\cap B_{\delta}(x)$, we have $z\neq y$ and
\begin{align*}
d\left(\frac{z-y}{\norm{z-y}},N_{B}(y)\right)&\le d\left(\frac{z-y}{\norm{z-y}},\R_+(x-y)\right)\\
&=  d\left(\frac{x-y}{\norm{x-y}},\R_+(z-y)\right)\\
&\le\norm{\frac{x-y}{\norm{x-y}} - \frac{z-y}{\norm{x-y}}}\\
&=\frac{\norm{z-x}}{\norm{x-y}}<\al.
\end{align*}
Therefore, by inequality \eqref{def2z},
\begin{equation}\label{A}
\inf\left\{d\left(\frac{y-z}{\norm{y-z}},N_A(z)\right): z\in A\cap B_{\delta}(x)\setminus B \right\}\ge\al.
\end{equation}  
Furthermore, for any $z\in A\cap B_\de(x)$, by the triangle inequality,
\begin{align*}
	d(z,B)&\ge d(x,B) - \norm{x-z}\\
	&= \norm{x-y}-\norm{x-z}\\
	&\ge \norm{x-y} - \de=(1-\al)\norm{x-y}  >0,
\end{align*}
which implies $z\in A\setminus B$.  Thus, $A\cap B_\de(x)\cap B_\rho(y)\setminus B = A\cap B_\de(x)\cap B_\rho(y)$, and from \eqref{A}, 
$$
\inf\left\{d\left(\frac{y-z}{\norm{y-z}},N_A(z)\right): z\in A\cap B_{\delta}(x)\cap B_\rho(y) \right\}\ge\al.
$$
Then, applying Theorem~\ref{sl}, we obtain
$$
d(y,A)\le \norm{x-y}-\al\delta=(1-\al^2)\norm{x-y},
$$
as required.\end{proof}

Note that Conditions~1  and 1'' meet the conditions required for Lemma~\ref{non-int2}. Now, consider three consecutive alternating projections:
\begin{equation*}
x_{2n}\in A,\quad  x_{2n+1}\in P_B(x_{2n}),\quad x_{2n+2}\in P_A(x_{2n+1}).
\end{equation*}
\noindent Under Condition 1, if $d(x_{2n+1},A) >d(A,B)$ (i.e., convergence has not been achieved after $2n+1$ iterations), then by Lemma~\ref{non-int2} we have
$$
\norm{x_{2n+2} - x_{2n+1}}= d(x_{2n+1},A) \le (1-\al^2)\norm{x_{2n+1} - x_{2n}}.
$$

\noindent This idea plays the core role in the following theorem.

\begin{theorem}\label{lin-con1}
	Consider two closed sets $A,B$ of a Hilbert space $X$ and suppose Condition 1 holds for some $\al \in (0,1)$. Consider a sequence of alternating projections $(x_n)$ where $x_{2n}\in A$ and $x_{2n+1} \in B$ ($n\ge 0$).
	\begin{enumerate}
		\item If $d(A,B)>0$, then the sequence $(x_n)$ attains the minimum distance in at most $2N+1$ steps, where
		\begin{equation}
		\label{constant}
		N:=\left\lfloor \log_{1-\al^2} \left(\frac{d(A,B)}{d(x_0,B)} \right)\right\rfloor.
		\end{equation}
		\item If $d(A,B) = 0$, then the sequence $(x_n)$ converges linearly to a point in the intersection $A\cap B$ with rate $(1-\al^2)$, i.e.,
		\begin{equation}\label{LN}
		\norm{x_{n+1} - x_{n}} \le (1-\al^2)^n \norm{x_1-x_0}.
		\end{equation}
	\end{enumerate}
	
\end{theorem}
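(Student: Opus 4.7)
The plan is to cast both parts as consequences of a single per-step contraction estimate obtained by feeding the alternating-projection iterates into Lemma~\ref{non-int2}. Write $c_n:=\norm{x_{n+1}-x_n}$. Two elementary observations are the starting point: (a) $(c_n)$ is non-increasing, since $x_{n-1}$ lies in the set opposite to $x_n$, giving $c_n\le \norm{x_n-x_{n-1}}=c_{n-1}$; and (b) $c_n\ge d(A,B)$, because consecutive iterates lie in $A$ and $B$. The core step is that whenever $c_{2n+1}>d(A,B)$, the pair $(x,y)=(x_{2n},x_{2n+1})$ satisfies the hypotheses of Lemma~\ref{non-int2}: Condition~1 supplies inequality \eqref{def2z} for every $z\in A\setminus B$ (and in particular for those in $A\cap B_\delta(x_{2n})\setminus B$), so the lemma gives $c_{2n+1}=d(x_{2n+1},A)\le (1-\al^2)\,c_{2n}$.

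For Part~(i) I would argue by contradiction. Suppose $c_n>d(A,B)$ for every $n\in\{0,1,\dots,2N+1\}$. Then the contraction above applies at each of the $N+1$ odd indices $1,3,\dots,2N+1$, and chaining it with the trivial monotonicity $c_{2k}\le c_{2k-1}$ yields $c_{2N+1}\le (1-\al^2)^{N+1}\,c_0$. The definition \eqref{constant} of $N$ forces $(1-\al^2)^{N+1}\,c_0<d(A,B)$, contradicting $c_{2N+1}>d(A,B)$. Hence $c_n=d(A,B)$ for some $n\le 2N+1$; the minimum distance is preserved thereafter, because $\norm{x_n-x_{n+1}}=d(A,B)$ implies any $x_{n+2}\in P_A(x_{n+1})$ satisfies $\norm{x_{n+2}-x_{n+1}}=d(x_{n+1},A)=d(A,B)$.

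For Part~(ii), when $d(A,B)=0$ Condition~1 becomes symmetric in the roles of $A$ and $B$: inequality \eqref{def2} is jointly invariant under the swap $(x,y,A,B)\mapsto(y,x,B,A)$, and the constraint ``$d(y,A)>0$'' reduces to $y\in B\setminus A$, which matches the symmetric constraint ``$d(x,B)>0$'' after relabelling. Applying Lemma~\ref{non-int2} to the pair $(x_{2n+1},x_{2n+2})$ with $A$ and $B$ interchanged therefore yields $c_{2n+2}\le (1-\al^2)\,c_{2n+1}$, and combining with the previous bound gives $c_{n+1}\le(1-\al^2)\,c_n$ at every step. Iterating produces \eqref{LN}; then $\sum_n c_n<\infty$ shows that $(x_n)$ is Cauchy, and its limit $\bar{x}$ lies in $A\cap B$ because $A$ and $B$ are closed and both $(x_{2n})$ and $(x_{2n+1})$ converge to $\bar{x}$.

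The main obstacle I foresee is the bookkeeping in Part~(ii) needed to see that the one-sided statement of Condition~1 suffices for the ``swapped'' application of Lemma~\ref{non-int2}; the crux is the joint $(x,y,A,B)$-symmetry of \eqref{def2} together with the collapse $d(A,B)=0$, which together render the constraints on $d(y,A)$ and on $d(x,B)$ equivalent. A minor care point in Part~(i) is verifying that the strict inequality $(1-\al^2)^{N+1}c_0<d(A,B)$ really follows from $N$ being defined as a floor.
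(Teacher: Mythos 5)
Your proposal is correct and takes essentially the same route as the paper: the per-step contraction $\norm{x_{2n+2}-x_{2n+1}}\le(1-\al^2)\norm{x_{2n+1}-x_{2n}}$ from Lemma~\ref{non-int2}, chained with the monotonicity $\norm{x_{2k+1}-x_{2k}}\le\norm{x_{2k}-x_{2k-1}}$, is exactly the paper's induction for part (i), and your symmetric (swapped) application of the lemma when $d(A,B)=0$ is the paper's observation that $A$ and $B$ are interchangeable in that case for part (ii). Your extra touches (the explicit floor-strictness check and the Cauchy argument identifying the limit in $A\cap B$) are correct and only elaborate details the paper leaves implicit.
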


\begin{proof}
Let $d(A,B)>0$. 
If convergence has not occurred after $2n+1$ steps ($n\ge 0$), then by induction,
\begin{equation}\label{induction}
d(A,B)<d\left(x_{2n+1},A\right)\le (1-\al^2)^{n+1} \norm{x_1-x_0}.
\end{equation}
Indeed, for $n=0$, \eqref{induction} follows from applying Lemma~\ref{non-int2} with $x_0$ and $x_1$, and assuming that \eqref{induction} holds for $n\ge 0$, if convergence has not occurred in $2(n+1)+1$ steps, then again by Lemma~\ref{non-int2},
\begin{align*}
	d(A,B)< d(x_{2n+3},A)
	 &\le (1-\al^2) \norm{x_{2n+3}-x_{2n+2}}\\
	& = (1-\al^2) d(x_{2n+2},B)\\
	&\le (1-\al^2)d(x_{2n+1},A)\\
	&\le (1-\al^2)^{n+2} \norm{x_1-x_0},
\end{align*}
which completes the induction argument. 
Now, if $d(A,B) < d(x_{2n+1},A)$, then from \eqref{induction}, 
$$n< \log_{1-\al^2} \left(\frac{d(A,B)}{d(x_0,B)} \right)-1 <N,$$ where $N$ is defined in \eqref{constant}.
Therefore, convergence must have occurred when $n=N$, which gives $2N+1$ as the upper bound for the number of iterations. 

Let $d(A,B) =0$. Observe that the condition $d(y,A)> d(A,B)$ is equivalent to $y\in B\setminus A$. Therefore, the sets $A$ and $B$ can be used interchangeably in Lemma~\ref{non-int2}. If the alternating projections have converged at step $n$, then
$$
\norm{x_{n+1}-x_n} = d(A,B)=0,
$$
and inequality \eqref{LN} holds trivially. If, on the other hand, the alternating projections have not converged at step $n$, then by Lemma~\ref{non-int2}, applied to $x_n$ and $x_{n-1}$, 
	\begin{align*}
	\norm{x_{n+1}-x_{n}}& \le (1-\al^2)\norm{x_{n}-x_{n-1}},
	\end{align*}
from which an induction argument proves \eqref{LN}.			
\end{proof}

The following examples demonstrate the application of Condition 1 in Theorem~\ref{lin-con1}.
\begin{figure*}[!ht]
	\centering
	\subfigure[Linear convergence (Example~\ref{ex1})]{\makebox[5cm][c]{\includegraphics[scale=0.6,width=1in]{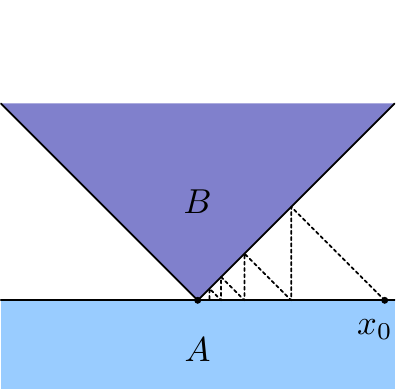}}}
	\quad\quad\quad\quad\quad\quad\quad\quad\quad\quad
	\subfigure[Finite convergence (Example~\ref{ex11})]{\makebox[5cm][c]{\includegraphics[scale=0.6,width=1in]{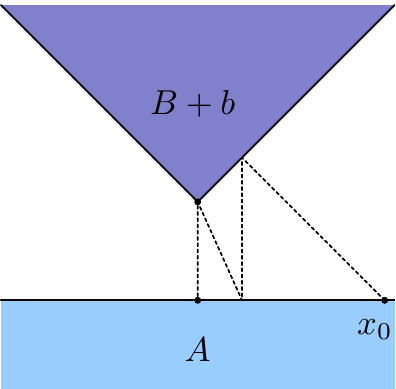}}}
	\caption{Alternating projections between a half space and the epigraph of $\abs{x}$.}
	\label{ln1}
\end{figure*}

\begin{example}\label{ex1}  

	Consider the space $X=\R^2$ equipped with the Euclidean norm and two closed sets $A:=\{(u,v): v\le 0\}$, $B:=\{(u,v): v\ge |u|\}$ and $x_0\in A$; {see Figure~\ref{ln1}(a)}. We show that Condition 1 holds in this setting.
		
	If $y \in \Int B$ or $x \in \Int A$, the proximal normal cones at these points are trivial, i.e., $N_B(y) =\{0\}$ or $N_A(x) =\{0\}$, and thus
		$d\left(\frac{x-y}{\norm{x-y}},N_B(y)\right)= 1$ or $d\left(\frac{y-x}{\norm{x-y}},N_A(x)\right)=1$, respectively. Hence, it is sufficient to consider $x\in \bd A$ and $y \in \bd B$. Take $x = (x_1,0)$  and $y= (x_2,\abs{x_2})$ with $x_1,x_2\in \R, x_2\neq 0$. Observe that
		$$
		N_A(x) =\R_+(0,1);
		\; N_B(y) = \begin{cases}
			\R_+(1,-1), & x_2 >0,\\
			\R_+(-1,-1), & x_2 <0.
		\end{cases}
		$$
		We have $$x-y= (x_1-x_2,-\abs{x_2}),\quad{\norm{x-y}} = {\sqrt{(x_1-x_2)^2+x_2^2}}.$$ 
		Therefore,
		\begin{align}	
			\notag
		d\left({y-x},N_A(x)\right)^2 &= \min_{t\ge 0} \left({\left(x_2-x_1\right)^2+\left( t -\abs{x_2}\right)^2} \right)\\
			& \label{P1E1}= \left(x_2-x_1\right)^2.
		\end{align}
		Moreover, for the case $x_2 >0$,
		\begin{align*}
		d\left({x-y},N_B(y)\right)^2 &=\min_{t\ge 0} \left(\left( t -(x_1-x_2)\right)^2+\left( t -x_2\right)^2 \right)\\
		&=\begin{cases}
			\tfrac{1}{2}\left(x_1-2x_2\right)^2, & x_1 \ge 0,\\
		(x_1-x_2)^2+x_2^2,& x_1 < 0;
		\end{cases}
		\end{align*}
		and for the case $x_2 <0$, 
			\begin{align*}
			d\left({x-y},N_B(y)\right)^2 &=\begin{cases}
				\tfrac{1}{2}\left(x_1-2x_2\right)^2, & x_1 \le 0,\\
				(x_1-x_2)^2+x_2^2,& x_1 > 0.
			\end{cases}
		\end{align*}
		Since $\left(x_1-2x_2\right)^2 = \left((x_1-x_2)-x_2\right)^2\le 2(x_1-x_2)^2+2x_2^2$, the following inequality always holds:
		$$
		d\left({x-y},N_B(y)\right)^2\ge \tfrac{1}{2} \left(x_1-2x_2\right)^2.
		$$
		This inequality combined with \eqref{P1E1} yields
		\begin{align*}
		&d\left(\frac{y-x}{\norm{y-x}},N_A(x)\right)^2 + d\left(\frac{x-y}{\norm{x-y}},N_B(y)\right)^2\\
		&\ge \frac{2\left(x_1-x_2\right)^2+\left(x_1-2x_2\right)^2}{{2(x_1-x_2)^2+2x_2^2}}\\
		&= \frac{(x_1-x_2)^2+[(x_1-x_2)^2+(x_1-2x_2)^2]}{{2(x_1-x_2)^2+2x_2^2}}\\
		&\ge  \frac{(x_1-x_2)^2+\tfrac{1}{2} x_2^2}{{2(x_1-x_2)^2+2x_2^2}}\ge  \frac{\tfrac{1}{2}(x_1-x_2)^2+\tfrac{1}{2} x_2^2}{{2(x_1-x_2)^2+2x_2^2}} = \frac{1}{4}.
		\end{align*}
		Hence,
		\begin{align*}
		\max\left\{d\left(\frac{y-x}{\norm{x-y}},N_A(x)\right),d\left(\frac{x-y}{\norm{x-y}},N_B(y)\right)\right\} &\\
		\ge\frac{1}{2\sqrt{2}}&.
		\end{align*}
		According to Theorem~\ref{lin-con1}(ii), the method of alternating projections converges linearly to the origin with rate $7/8$.
		
\end{example}
\begin{example}\label{ex11}
		Consider the setting in Example~\ref{ex1}.
		By shifting $B$ with a vector $b=(0,k)$, $k>0$, we obtain non-intersecting sets $\{A,B+b\}$ with $d(A,B+b) = \norm{b} =k$; see Figure~\ref{ln1}(b). 
		The points $(0,0)$ and $(0,k)$ are the closest points between the two sets. 
		Take $x\in \bd A$ and $y \in \bd (B+b)$ with $x=(x_1,0)$, $y = (x_2,\abs{x_2}+k)$, $x_1,x_2\in \R$, $x_2\neq 0$.
		We introduce the points $x'$ and $y'$ defined by $x'= (x_1+k,0)$  and $y' = (x_2+k,\abs{x_2}+k)$ if $x_2>0$, and $x'= (x_1-k,0)$  and $y' = (x_2-k,\abs{x_2}+k)$ if $x_2 <0$. Then clearly, $x'\in \bd A$, $y'\in \bd B$, $y'\neq(0,0)$, and $x'-y' = x-y$. Furthermore, by the results in  Example~\ref{ex1}, $N_A(x') = N_{A}(x)$, $N_B(y') = N_{B+b}(y)$, and
		{\small
		\begin{align*}
			&\max\left\{d\left(\frac{x-y}{\norm{x-y}},N_{B+b}(y)\right),d\left(\frac{y-x}{\norm{x-y}},N_A(x)\right)\right\}\\ 
			&= \max\left\{d\left(\frac{x'-y'}{\norm{x'-y'}},N_{B}(y')\right),d\left(\frac{y'-x'}{\norm{x'-y'}},N_A(x')\right)\right\}\\
			&\ge \frac{1}{2\sqrt{2}}.
		\end{align*}}%
		According to Theorem~\ref{lin-con1}(i), the alternating projections converge after $2\left\lfloor\log_{7/8} \left(\frac{k}{d(x_0,B+b)} \right)\right\rfloor+1$ steps. Note that for a fixed starting point $(x_0,0)$,
		when $k$ is sufficiently large, $P_{B+b}((x_0,0)) = \{(0,k)\}$, $d((x_0,0), B+b) = \sqrt{x_0^2+k^2}$, and only one step is required.
\end{example}


We now give two examples where Condition 1 is not satisfied.

\begin{example}\label{ex2}  
	Let $A:=\{(x,y):y\le 0\}$ and $B:=\{(x,y): y\ge x^2\}$; see Figure~\ref{nln}(a). Since $A$ and $B$ are convex, intrinsic transversality and subtransversality are equivalent \cite{Kru18}. Consider two sequences of points $(a_n)\subset A$ and $(b_n)\subset B$ defined by $a_n = (1/n,0)$ and $b_n = (1/n,1/n^2)$. For all $n\ge 1$,
		$$
		N_A(a_n) =\R_+(0,1);
		\; N_B(b_n) = \R_+(2/n,-1).
		$$
		(See normal cone of a function's epigraph in \cite{Mor06.1}.)
		We have 
		\begin{align*}
		d\left(\frac{b_n-a_n}{\norm{a_n - b_n}}, N_A(a_n)\right)&=0\\ d\left(\frac{a_n-b_n}{\norm{a_n - b_n}}, N_B(b_n)\right)&=\min_{t\ge 0}\big\|(0,-1)-t(2/n,-1)\big\|\\
		&\hspace{-1cm}= \min_{t\ge 0}\sqrt{(2t/n)^2+ (t-1)^2}\le \frac{2}{n}.
		\end{align*}
		Therefore,
		\begin{align*}
			\max\left\{d\left(\frac{b_n-a_n}{\norm{a_n - b_n}}, N_A(a_n)\right),d\left(\frac{a_n-b_n}{\norm{a_n - b_n}}, N_B(b_n)\right)\right\} &\\
		\le\frac{2}{n}.
		\end{align*}
			Since the right-hand side approaches $0$ as $n\rightarrow\infty$, intrinsic transversality (and hence subtransversality) does not hold for any $\alpha\in(0,1)$, and therefore Condition~1 also does not hold. The method of alternating projections cannot converge linearly because subtransversality is a necessary condition for linear convergence \cite[Theorem 8]{ luke2020necessary}.
\end{example}
\begin{example}\label{ex22}
		Consider the setting in Example~\ref{ex2}.
		Shifting $B$ by $b=(0,k)$, $k >0$, we obtain $\{A,B+b\}$ with closest points $(0,0)\in A$ and $(0,k)\in (B+b)$; see Figure~\ref{nln}(b). 
		Suppose that the alternating projections have not converged to the minimum distance in $n\ge 0$ steps. If $x_n = (v_1,v_1^2+k) \in B+b$ with $v_1 \neq 0$, then $x_{n+1}= (v_1,0) \neq (0,0)$. If $x_n = (v_1,0) \in A$ with $v_1 \neq 0$, then $b\notin P_{B+b}(x_n)$ since $d\left(x_n - b, N_{B+b}(b) \right) = d\left((v_1, - k), \R_+(0,-1) \right)>0$, and hence $x_{n+1}\neq (0,k)$.	
		Therefore, if the alternating projections have not converged after $n$ steps, then they also do not converge after $n+1$ steps, and there is no finite convergence.
\end{example}

\begin{figure*}[!ht]
	\centering
	\subfigure[No linear convergence (Example~\ref{ex2})]{\makebox[5.5cm][c]{\includegraphics[width=1in]{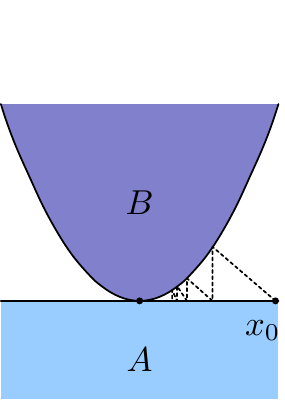}} }\quad\quad\quad\quad\quad\quad\quad\quad\quad\quad\quad\quad
	\subfigure[No finite convergence (Example~\ref{ex22})]{\makebox[5.5cm][c]{\includegraphics[width=1in]{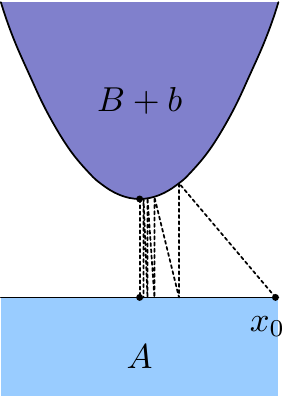}}}
	\label{nln}
	\caption{Alternating projections between a half space and the epigraph of $x^2$.}
\end{figure*}



%


The next theorem establishes linear and finite convergence for the local setting defined by Condition 1'.

\begin{theorem}\label{lin-con}
	Consider two closed subsets $A,B$ of a Hilbert space $X$, $a\in A, b\in B$ such that $\norm{a-b}= d(A,B)=d$ and suppose Condition 1' holds for some $\al \in (0,1)$ and $\rho >0$. Consider a sequence of alternating projections $(x_n)$ where $x_{2n}\in A$ and $x_{2n+1} \in B$ ($n\ge 0$), initiated sufficiently close to $a$. 
	\begin{enumerate}
		\item If $d(A,B)>0$, then $(x_n)$ attains the minimum distance in one step.
		\item If $d(A,B) =0$, then $(x_n)$ converges linearly with rate $1-\al^2$.
	\end{enumerate}
\end{theorem}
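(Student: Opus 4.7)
The plan is to apply Lemma~\ref{non-int2} to consecutive pairs of iterates, drawing its hypothesis \eqref{def2z} from Condition~1' after checking that the iterates (and the auxiliary points $z$ inside the Ekeland-type ball $B_\delta(x_n)$) remain in $B_{2d+\rho}(a)\cup B_{2d+\rho}(b)$. The quantitative input that makes this feasible is the elementary estimate $\norm{x_{n+1}-x_n}\le d+\norm{x_n-a}$ when $x_n\in A$ (obtained from $b\in B$ and the triangle inequality), together with its symmetric counterpart when $x_n\in B$. Starting $x_0$ close to $a$ therefore keeps every subsequent step only slightly longer than $d$.

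For part~(i), let $d>0$ and pick $x_0\in A$ with $\norm{x_0-a}<\varepsilon$, $x_1\in P_B(x_0)$, and set $\delta:=\al\norm{x_0-x_1}$. Three inclusions need to be verified: $x_1\in B_{2d+\rho}(a)$ via $\norm{x_1-a}\le d+2\varepsilon$; every $z\in A\cap B_\delta(x_0)$ lies in $B_{2d+\rho}(b)$ via $\norm{z-b}\le(1+\al)(d+\varepsilon)$; and $x_1\in B\setminus A$, automatic because $d>0$. All three hold once $\varepsilon<\rho/(1+\al)$. Assume for contradiction $d(x_1,A)>d$. Then Lemma~\ref{non-int2} yields $d(x_1,A)\le(1-\al^2)(d+\varepsilon)$; shrinking $\varepsilon$ further so that $(1-\al^2)(d+\varepsilon)<d$ (which is possible precisely because $d>0$) contradicts $d(x_1,A)\ge d(A,B)=d$. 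Hence $d(x_1,A)=d$, and $x_2\in P_A(x_1)$ attains the minimum distance.

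Part~(ii) has $d=0$ and $a=b$, so the two balls appearing in Condition~1' both coincide with $B_\rho(a)$. I would pick $\varepsilon$ small enough that $\varepsilon(1+\al^2+\al^3)/\al^2\le\rho$, and start with $\norm{x_0-a}<\varepsilon$. Applying Lemma~\ref{non-int2} alternately to $(x_{2k},x_{2k+1})$ and $(x_{2k+1},x_{2k+2})$ -- legitimate by the symmetry of \eqref{def2} under swapping $A$ and $B$, as already observed after Theorem~\ref{lin-con1} -- the induction hypothesis $\norm{x_{n+1}-x_n}\le(1-\al^2)\norm{x_n-x_{n-1}}$ propagates at every step at which convergence has not already occurred, giving \eqref{LN}. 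Summing the geometric series yields $\norm{x_n-a}\le\varepsilon+\varepsilon/\al^2$ uniformly in $n$, and adding $\al\varepsilon$ to account for the $z\in B_\delta(x_{n-1})$ keeps all points referenced in Condition~1' inside $B_\rho(a)$, closing the induction.

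The principal obstacle is this neighbourhood-preservation bookkeeping in part~(ii): both the iterates and the auxiliary points $z$ entering the hypothesis of Lemma~\ref{non-int2} must stay in $B_{2d+\rho}(a)\cap B_{2d+\rho}(b)$ throughout the induction, for otherwise Condition~1' no longer supplies \eqref{def2z}. The geometric contraction automatically bounds the total drift by a fixed multiple of $\varepsilon$, which is what makes the scheme self-sustaining once $\varepsilon$ is small. Part~(i) sidesteps this drift issue because only one projection pair is used, and instead hinges on the strict inequality $(1-\al^2)(d+\varepsilon)<d$, which is the quantitative content of ``initiated sufficiently close to~$a$''.
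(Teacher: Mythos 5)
Your argument is correct and essentially the paper's own proof: part (i) is a single application of Lemma~\ref{non-int2} to $(x_0,x_1)$ after checking that $x_1$ and the auxiliary points $z$ lie in the balls required by Condition~1', followed by the contradiction $(1-\alpha^2)(d+\varepsilon)<d$ (the paper packages your smallness requirements into $\kappa=\min\{\alpha^2 d(A,B),\rho/2\}$), while part (ii) is the same joint induction propagating the contraction and confining both the iterates and the points $z\in B_\delta(x_k)$ to $B_\rho(a)$ (the paper's $\kappa=\alpha^2\rho/(2\alpha^2+2)$ corresponds to your condition $\varepsilon(1+\alpha^2+\alpha^3)/\alpha^2\le\rho$). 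The only cosmetic slip is that $\varepsilon<\rho/(1+\alpha)$ alone does not guarantee $d+2\varepsilon<2d+\rho$ in every regime, but this is harmless since your final $\varepsilon$ is shrunk further anyway.
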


\begin{proof}
Suppose that Condition 1' holds with $\al \in (0,1)$ and $\rho>0$.

Consider the case $d(A,B) >0$. Take $\kappa:=\min\{\al^2d(A,B),\rho/2\}$ and $x_0 \in A\cap B_\kappa(a)$, $x_1 \in P_B(x_0)$. Then 
\begin{align*}
\norm{x_1 - a} & \le \norm{x_1 - x_0} + \norm{x_0-a}\\
&\le \norm{b-x_0}+\norm{x_0-a} \\
&\le \norm{b-a} + 2\norm{x_0-a} < 2d(A,B) +2\kappa,
\end{align*}
and so $x_1\in B_{2d+\rho}(a)$. Let $\de := \al\norm{x_0-x_1}$.
For any $z\in A\cap B_\de(x_0)$,
\begin{align*}
	\norm{z-b} &\le \norm{z-x_0}+\norm{x_0-b}\\
	&\le \al\norm{x_0-x_1}+\norm{x_0-b} \\
	&\le (1+\al)\norm{x_0-b}\\
	&< 2d(A,B) + 2\kappa,
\end{align*}
and hence $z\in A\cap B_{2d+\rho}(b)$. Thus, since $x_1\in B_{2d+\rho}(a)$ and $z\in A\cap B_\de(x_0)$ implies $z\in A\cap B_{2d+\rho}(b)$, Condition 1' ensures that Lemma~\ref{non-int2} can be applied to $x_0$ and $x_1$ when $d(x_1,A) > d(A,B)$.
This gives
\begin{align*}
	d(x_1,A)&\le  \norm{x_1-x_0}-\al^2 \norm{x_1-x_0}\\
&\le \norm{x_0-b} - \al^2 d(A,B)\\
&\le \norm{x_0-b}-\kappa\\
&\le \norm{x_0-a}+\norm{a-b}-\kappa<d(A,B),
\end{align*}
which contradicts the assumption $d(x_1,A) >d$. This implies $d(x_1,A) = d(A,B)$, and hence convergence occurs after one step when the sequence is initiated in $A\cap B_\kappa(a)$.
		
Now consider the case $d(A,B) = 0$. In this case, we have $a\equiv b \equiv \bx \in A\cap B$.
Set $\kappa :=\al^2\rho/(2\al^2+2)$.
Take $x_0\in A\cap B_{\kappa}(\bx)$. We now prove by induction that the following holds for all $n\ge 0$:
\begin{equation}\label{local}
x_n\in B_{\rho/2}(\bx),\quad \norm{x_{n+1} - x_n} \le (1-\al^2)^n\norm{x_1-x_0}.
\end{equation}
This holds immediately for $n=0$. Assume now that it holds for $n=0,\ldots,k$. Then, 
\begin{align*}
\norm{x_{k+1}-\bx} &\le \sum_{n=0}^{k}\norm{x_{n+1}-x_{n}}+\norm{x_0-\bx}\\
&\le \sum_{n=0}^{k}(1-\al^2)^n\norm{x_{1}-x_{0}}+\norm{x_0-\bx}\\
&= \left(\frac{1-(1-\al^2)^{k+1}}{\al^2}\right)\norm{x_1-x_0}+\norm{x_0-\bx}\\
&\le\frac{\norm{x_1-x_0}}{\al^2} +\norm{x_0-\bx}\\
&\le \left(\frac{1}{\al^2}+1 \right)\norm{x_0-\bx}< \tfrac{1}{2}\rho,
\end{align*}
and hence $x_{k+1}\in B_{\rho/2}(\bx)$. If $x_{k+1}\in A\cap B$, then convergence has been achieved and the inequality in \eqref{local} holds trivially for $n=k+1$. Otherwise, $d(x_{n+1},A)>0$ if $x_k\in A$ and $d(x_{k+1},B)>0$ if $x_k\in B$. For any $z\in B_\de(x_k)$, with $\de := \al\norm{x_{k+1} - x_k}$, we have
\begin{align*}
\norm{z-\bx} &\le \norm{z-x_k} + \norm{x_k-\bx} \\
&< \de + \norm{x_k-\bx}\\
& = \al \norm{x_{k+1} - x_k} + \norm{x_k - \bx}\\
&\leq 2\norm{x_k - \bx}< \rho,
\end{align*}
and hence $z\in B_\rho(\bx)$. Hence, by Condition~1', if $x_{k+1}\notin A\cap B$, then when $x_k\in A$, we can apply Lemma~\ref{non-int2} to derive $d(x_{k+1},A) \le (1-\al^2)\norm{x_{k+1}-x_k}$, and when $x_k\in B$ we can apply Lemma~\ref{non-int2} to derive $d(x_{k+1},B) \le (1-\al^2)\norm{x_{k+1}-x_k}$. This gives
\begin{align*}
\norm{x_{k+2} - x_{k+1}} &\le (1-\al^2)\norm{x_{k+1} - x_k}\\
&\le (1-\al^2)^{n+1}\norm{x_1-x_0},
\end{align*}
which shows that \eqref{local} holds for $n=k+1$.
\end{proof}
\begin{remark}
	\begin{enumerate}
		\item Since Condition 1' covers intrinsic transversality, Theorem~\ref{lin-con} covers Theorem~\ref{LC-Dru}.
		\item Under Condition 1' when $d(A,B)>0$, it follows from Theorem~\ref{lin-con}(i) that if the alternating projections converge to $a$ and $b$, then they must converge in a finite number of steps, because eventually the sequence will enter the ball $A\cap B_\kappa(a)$, after which only one more projection is needed. However, estimating the number of steps is difficult because Condition 1'  is only a local condition and it may not be satisfied at every iteration. In Theorem~\ref{lin-con1}, we could quantify the number of steps because Condition 1 applies globally, unlike Condition 1'.
	\end{enumerate}

\end{remark}

We now define an alternative to Conditions~1 and~1'.
\begin{subcondition}[2]
	\label{con3}
	Given two closed subsets $A,B$ of a Hilbert space $X$, and $\al \in (0,1)$, $\beta \in [0,1]$, the following inequality holds for all $x\in A\setminus B$ and $y\in P_B(x)$ such that $d(y,A)>d(A,B)$:
	\begin{align} \nonumber
	&\inf \bigg\{ d\left(\frac{y-z}{\norm{y-z}}, N_A(z) \right):\; z\in A\cap B_\rho (y) \cap B_\de(x),\\
	\nonumber
	&\quad\quad\quad\rho = \norm{x-y},\,\de= \al(\norm{x-y}-\beta d(A,B))\bigg\}\ge \al.
	\end{align}
\end{subcondition}

Unlike Condition 1, which considers all vectors $y \in B\setminus A$, Condition~2 only considers the projections of $x\in A$ onto $B$. 
Condition~2 also provides the flexibility to choose the neighbourhood of $x$ in $A$ by adjusting the new parameter $\beta$. The next theorem shows that the speed of convergence depends on the neighbourhood's radius. 

\begin{theorem}\label{lin-con2}
	Consider two closed subsets $A,B$ of a Hilbert space $X$ with $d(A,B)=d\ge 0$ and suppose Condition~2 holds for some $\al \in (0,1)$ and $\beta \in [0,1]$. Consider a sequence of alternating projections $(x_n)$ where $x_{2n}\in A$ and $x_{2n+1} \in B$ ($n\ge 0$). 
	\begin{enumerate}
		\item If $\beta <1$ and $d>0$, then $(x_n)$ attains the minimum distance in at most $2N+1$ steps, where
		\begin{equation}\label{N}
			N := \left\lfloor \log_{1-\al^2}\left(\frac{d  (1-\beta)}{ \norm{x_1-x_0} - \beta d}\right) \right \rfloor.
		\end{equation}
		\item If $d=0$ or $\beta=1$, then the sequence $(x_n)$ converges linearly with rate $\sqrt{1-\al^2}$.
	\end{enumerate}
\end{theorem}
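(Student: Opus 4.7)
My plan is to imitate the proof of Theorem~\ref{lin-con1}, but to apply Theorem~\ref{sl} (Distance decrease) directly to Condition~2 rather than routing through Lemma~\ref{non-int2}; the new parameter $\beta$ will then be absorbed by a shift $s_n := r_n - \beta d$. To start, for any $x\in A\setminus B$ and $y\in P_B(x)$ with $d(y,A)>d$, I would observe that Condition~2 supplies exactly the infimum hypothesis of Theorem~\ref{sl} with $a=x$, $b=y$, $\rho=\norm{x-y}$, and $\delta=\al(\norm{x-y}-\beta d)$. Here $\delta>0$, since $\norm{x-y}=d(x,B)\ge d$ and equality would force $(x,y)$ to be a minimum-distance pair, contradicting $d(y,A)>d$. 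Theorem~\ref{sl} then yields
\begin{equation*}
d(y,A)\le\norm{x-y}-\al\delta=(1-\al^2)\norm{x-y}+\al^2\beta d.
\end{equation*}

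Next, set $r_n:=\norm{x_{n+1}-x_n}$ and $s_n:=r_n-\beta d$. The displayed inequality, applied at $(x_{2n},x_{2n+1})$ (legitimate whenever $r_{2n+1}=d(x_{2n+1},A)>d$), becomes $s_{2n+1}\le(1-\al^2)\,s_{2n}$. Combined with the always-valid monotonicity $r_{n+1}\le r_n$ (since $x_{n+1}$ projects $x_n$ onto the set containing $x_{n-1}$), this gives $s_{2n+2}\le s_{2n+1}\le(1-\al^2)\,s_{2n}$. By induction, as long as convergence has not yet occurred through step $2n+1$,
\begin{equation*}
s_{2n}\le(1-\al^2)^n s_0 \quad\text{and}\quad s_{2n+1}\le(1-\al^2)^{n+1}s_0.
\end{equation*}
The shift $s_n=r_n-\beta d$ is the crux: $\beta d$ is the fixed point of $r\mapsto(1-\al^2)r+\al^2\beta d$, so shifting by it converts an inhomogeneous scalar recursion into a clean geometric one.

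For part~(i), suppose $\beta<1$, $d>0$, and convergence has not yet occurred at step $2n+1$; then $r_{2n+1}>d$, so $s_{2n+1}>(1-\beta)d$, and combined with the above geometric bound I get $(1-\beta)d/(\norm{x_1-x_0}-\beta d)<(1-\al^2)^{n+1}$. Taking $\log_{1-\al^2}$ (which reverses the inequality, since $1-\al^2\in(0,1)$) forces $n+1<\log_{1-\al^2}\bigl((1-\beta)d/(\norm{x_1-x_0}-\beta d)\bigr)$; the choice $n=N$ as in \eqref{N} then yields a contradiction, so convergence must occur by step $2N+1$. For part~(ii), either factor of $(1-\beta)d$ vanishes, so after convergence $s_n\equiv 0$ and the two geometric bounds hold unconditionally for all $n\ge 0$; using $(1-\al^2)\le\sqrt{1-\al^2}$, the even and odd bounds merge into the single estimate $s_n\le(\sqrt{1-\al^2})^n s_0$, which is the claimed linear convergence. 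The main hurdle is bookkeeping: I must track precisely when Condition~2 is invokable (it requires $r_{2n+1}>d$, not merely $\ge d$), so that the induction does not misfire and the degenerate case $\delta=0$ is safely avoided.
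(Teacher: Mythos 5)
Your proposal is correct and follows essentially the same route as the paper: the paper's proof of this theorem also applies Theorem~\ref{sl} directly (with $\rho=\norm{x-y}$, $\de=\al(\norm{x-y}-\beta d)$) to get $d(y,A)\le(1-\al^2)\norm{x-y}+\al^2\beta d$, then iterates using the monotonicity of the step lengths. Your shifted variable $s_n=r_n-\beta d$ is just a repackaging of the paper's explicit induction bound $(1-\al^2)^{n+1}\left(\norm{x_1-x_0}-\beta d\right)+\beta d$, and the two case analyses coincide.
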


\begin{proof}
	Assume that Condition 2 holds for $\al \in (0,1)$ and $\beta \in [0,1]$. 
	If $d(x_{2n+1},A)>d(A,B)$, then Condition 2 ensures that we can apply Theorem~\ref{sl} with $x_{2n}\in A$ and $x_{2n+1}\in B$ to yield
	\begin{align}
		\label{sub-P1}
		d<d(x_{2n+1},A)
		\le (1-\al^2)\norm{x_{2n+1} - x_{2n}}+\al^2\beta d.
	\end{align}
	We now prove by induction that, whenever $d(x_{2n+1},A)> d(A,B)$ ($n\ge 0$), 
	\begin{equation}
		\label{intd}
		d<d(x_{2n+1},A)\le (1-\al^2)^{n+1}\left( \norm{x_{1}-x_{0}} - \beta d \right)+\beta d.
	\end{equation}
	This is easily proved in the base step by substituting $n=0$ into \eqref{sub-P1}.
	For the inductive step, we assume that \eqref{intd} holds for $n=k\ge 0$, and then if $d(x_{2k+3},A) > d(A,B)$, using \eqref{sub-P1} gives
		{
		\begin{align*}
			\notag
			d(x_{2k+3},A)
			&\le (1-\al^2)\norm{x_{2k+3} - x_{2k+2}}+\al^2\beta d\\
			&\le (1-\al^2)\norm{x_{2k+2} - x_{2k+1}}+\al^2\beta d\\
			&= (1-\al^2)d(x_{2k+1},A)+\al^2\beta d\\
			&\le (1-\al^2)^{k+2}\left( \norm{x_{1}-x_{0}}-\beta d\right) + (1-\al^2) \beta d\\
			& \quad+\al^2\beta d\\
			&= (1-\al^2)^{k+2}\left( \norm{x_{1}-x_{0}} - \beta d \right) + \beta d,
	\end{align*}}%
	which proves \eqref{intd} for $n=k+1$, and hence \eqref{intd} holds for all $n\ge 0$. We now consider two cases:
	\begin{enumerate}
	\item[1.] $\beta <1$ and $d>0$; and
	\item[2.] $\beta =1$ or $d=0$.
	\end{enumerate}
	For case 1, if the alternating projections have not converged in $2n+1$ iterations ($d(x_{2n+1},A) >d$), then by \eqref{intd},
		\begin{align*}
		n < \log_{1-\al^2}\left(\frac{d (1-\beta)}{ \norm{x_1-x_0} - \beta d}\right)-1 < N.
		\end{align*}
	Hence, convergence must have occurred  after $2N+1$ iterations, proving part (i).

	For case 2, we have from \eqref{intd} that while convergence has not occurred, for odd integers $n\ge 1$,
	\begin{align}
		\notag
		\norm{x_{n+1}-x_n} &=d(x_n,A)\\
		\notag
		&\le\left(\sqrt{1-\al^2}\right)^{n+1}(\norm{x_1-x_0}-d)+d\\
		\label{str} 
		&<\left(\sqrt{1-\al^2}\right)^{n}(\norm{x_1-x_0}-d)+d,
	\end{align}
	and for even integers $n\ge 2$,
	\begin{align}
		\notag
		\norm{x_{n+1}-x_n} &=d(x_n,B)\\
		\notag
		&\le \norm{x_n - x_{n-1}}\\
		\notag
		&=d(x_{n-1},A)\\
		\label{estr} &<\left(\sqrt{1-\al^2}\right)^{n}(\norm{x_1-x_0}-d)+d.
	\end{align}
	Inequalities \eqref{str} and \eqref{estr} show that $\norm{x_{n+1}-x_n}$ converges to $d$ linearly with rate $\sqrt{1-\al^2}$.
\end{proof}

\begin{remark}
	In part (i) of Theorem~\ref{lin-con2}, the neighbourhood radius $\de$ in Condition 2 is always larger than the constant $\al(1-\beta) d(A,B)>0$, and in this case finite convergence is guaranteed. In part (ii), $\de \to 0$ as $\norm{x-y}\to d(A,B)$ and hence the neighbourhoods become arbitrarily small, and the theorem only gives convergence in the limit. We suspect that it may be possible to derive stronger convergence rates by using different expressions for the radius $\de$ in Condition 2.
	
	When $d(A,B) =0$, Theorem~\ref{lin-con2} provides a new sufficient condition for linear convergence in the consistent case. Condition~2 is weaker than intrinsic transversality as it only takes into account the normal cones for one of the sets. This shows that intrinsic transversality is not a necessary condition for linear convergence of alternating projections in general nonconvex settings.
\end{remark}

\section{Special case: Polyhedron and closed half space}
\label{Application}
We now apply the results in Section~\ref{main result} to the special case where the two sets are a polyhedron and a closed half space.

\begin{proposition}
\label{active}
	Consider two non-intersecting closed subsets $A,B\subseteq\mathbb{R}^{n}$ ($n\ge 1$) defined by
	$$A:=\{x: \ang{c,x}\le M \}, \AND B:=\{x: \mathbb{A} x\le b\},$$
	where $\mathbb{A}$ is a $m\times n$ matrix with rows $a_i\in \R^n$ ($i=1,\ldots,m$), $b\in \R^m$, $c\in \R^n\setminus\{0\}$ and $M\in \R$. 
	Then, the pair $\{A,B\}$ satisfies Condition 2 with $\beta =0$ and 
	\begin{equation}\label{al}
	\al:=\frac{1}{2} \min\left\{1,\min_{\substack{
		i=1,\ldots,m;\\ \ang{a_i,c}>-\norm{a_i}\norm{c} 
	}} d \left(\frac{a_i}{\norm{a_i}},\R_{+}(-c)\right) \right\},
	\end{equation}
	with the convention $\min\emptyset = +\infty$.
\end{proposition}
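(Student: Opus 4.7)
The goal is to verify the defining inequality of Condition~2 pointwise over $z\in A\cap B_\rho(y)\cap B_\delta(x)$, where $\rho=\|x-y\|$ and $\delta=\alpha\|x-y\|$; note first that $\alpha\in(0,1/2]$ by construction, so any distance equal to $1$ automatically beats $\alpha$. Fix $x\in A\setminus B$ and $y\in P_B(x)$ with $d(y,A)>d(A,B)$, and split on the position of $z$ relative to the boundary of $A$. If $z\in\mathrm{int}(A)$ then $N_A(z)=\{0\}$ and $d((y-z)/\|y-z\|,\{0\})=1\ge\alpha$, so the bound is immediate. The substantive case is $z\in\partial A=\{w:\langle c,w\rangle=M\}$, where $N_A(z)=\mathbb{R}_+c$.

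In that case, the closed-form expression for $d(\cdot/\|\cdot\|,\mathbb{R}_+c)$ derived in Section~\ref{Aux}, together with the identity $\langle c,y-z\rangle=\langle c,y\rangle-M=:\eta>0$ (which is independent of $z\in\partial A$), reduces the required inequality to the Euclidean estimate
\[
\|y-z\|^2\ \ge\ \frac{\eta^2}{(1-\alpha^2)\|c\|^2}\ =\ \frac{d(y,A)^2}{1-\alpha^2}.
\]
Writing $\pi$ for the orthogonal projection onto the hyperplane $\partial A$ and using the Pythagorean decomposition $\|y-z\|^2=d(y,A)^2+\|\pi(y)-z\|^2$ (valid because $y-\pi(y)$ is parallel to $c$ while $\pi(y)-z$ is orthogonal to $c$), this is in turn equivalent to the lower bound $\|\pi(y)-z\|\ge\alpha\,d(y,A)/\sqrt{1-\alpha^2}$.

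To supply this lower bound I would exploit the KKT structure of the projection: since $B$ is convex polyhedral and $y\in P_B(x)$, we have $x-y=\sum_{i\in I(y)}\mu_i a_i$ with $\mu_i\ge 0$, where $I(y)$ is the active-index set at $y$. The hypothesis $d(y,A)>d(A,B)$ is equivalent, by convex optimality, to $y$ not minimizing $\langle c,\cdot\rangle$ on $B$, and hence to $-c\notin N_B(y)$; in particular, no active $a_i$ can be a negative multiple of $c$, so every such $a_i$ lies in the index set over which the minimum defining $\alpha$ is taken. Combining the triangle inequality $\|\pi(y)-z\|\ge\|(y-x)^{\perp}\|-\|x-z\|\ge\|y-x\|\sin(\angle(y-x,c))-\alpha\|y-x\|$ (where $\perp$ denotes the component orthogonal to $c$) with $d(y,A)\le\|y-x\|\cos(\angle(y-x,c))$ reduces the problem to the single angular bound $\sin(\angle(y-x,c))\ge 2\alpha$.

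The main obstacle is precisely this last angular bound: translating the per-generator information ``each active $a_i$ makes angle with $-c$ whose sine is at least $2\alpha$'' into the same bound for arbitrary non-negative combinations $-u=\sum_{i\in I(y)}\mu_i a_i\in N_B(y)$. Since $-c\notin N_B(y)$, this is a separation-style claim that the polyhedral cone $N_B(y)$ stays uniformly away from the forbidden ray $\mathbb{R}_+(-c)$. I would close this by reducing to a planar wedge argument on two-dimensional slices of the cone, using that the closest direction in a polyhedral cone to an external ray is attained on a lower-dimensional face and iterating down to an edge, where the per-generator hypothesis on $\sin\phi_i$ applies directly. Once the angular bound is in hand, the chain of estimates in the previous paragraph produces the required $\|\pi(y)-z\|\ge\alpha\,d(y,A)/\sqrt{1-\alpha^2}$ and completes the verification of Condition~2.
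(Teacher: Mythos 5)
Your reduction is sound as far as it goes, and it essentially retraces the paper's own route: the case $z\in\Int A$ is trivial, the boundary case reduces via the closed-form distance formula to a single angular estimate $d\left(\frac{x-y}{\norm{x-y}},\R_+(-c)\right)\ge 2\al$ (your $\sin(\angle(y-x,c))\ge 2\al$), and your Pythagorean/triangle-inequality chain (the paper instead uses positive homogeneity and the $1$-Lipschitz property of $d(\cdot,\R_+(c))$) does close once that angular bound is available, since $\al\le 1/2$. The genuine gap is precisely the step you flag as the ``main obstacle'', and the fix you sketch is not correct: it is false that the angularly closest direction of a polyhedral cone to an external ray is attained on an edge (the nearest direction can lie in the relative interior of the cone), so the per-generator bound does not descend to arbitrary elements $x-y=\sum_{i\in I(y)}\mu_i a_i$ of $N_B(y)$. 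Concretely, with $-c=(0,0,1)$, $a_1=(1,0,1)$, $a_2=(-1,\epsilon,1)$, each generator satisfies $d\left(\frac{a_i}{\norm{a_i}},\R_+(-c)\right)\approx\frac{1}{\sqrt2}$ and $-c\notin\cone\{a_1,a_2\}$, yet $a_1+a_2=(0,\epsilon,2)$ is nearly parallel to $-c$; the planar-wedge/face-descent argument is valid only in $\R^2$, where the cone really is a wedge.

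Moreover, this is not a gap that a sharper cone lemma could close, because the configuration above is realizable within the proposition's hypotheses. Take $c=(0,0,-1)$, $M=0$ (so $A=\{x_3\ge 0\}$) and $B=\{x:\ x_1+x_3\le -2,\ -x_1+\epsilon x_2+x_3\le -2,\ -x_2\le 1/\epsilon\}$ with small $\epsilon>0$. Then $A\cap B=\emptyset$, $d(A,B)=3/2$, and \eqref{al} gives $\al=\frac{1}{2\sqrt2}$. At $y=(0,0,-2)$ only the first two rows are active, so $N_B(y)=\cone\{a_1,a_2\}\ni(0,\epsilon,2)$; setting $x=y+(0,\epsilon,2)=(0,\epsilon,0)\in A\setminus B$ yields $y\in P_B(x)$ and $d(y,A)=2>d(A,B)$, while $z=0$ lies in $A\cap B_\rho(y)\cap B_\de(x)$ (here $\rho=\sqrt{4+\epsilon^2}$, $\de=\al\rho$) and satisfies $d\left(\frac{y-z}{\norm{y-z}},N_A(z)\right)=0<\al$, so the inequality required by Condition~2 fails. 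In other words, for $n\ge 3$ the quantity in \eqref{al} does not control the angle between $x-y$ and $-c$. For what it is worth, the paper's published proof makes exactly the same leap — it asserts that the infimum of $d\left(\frac{v}{\norm v},\R_+(-c)\right)$ over the cone $S$ equals the minimum over its generators — so you have correctly isolated the crux; but neither your face-descent sketch nor that one-line identity establishes it, and a correct statement needs either $n=2$, a hypothesis on all of $N_B(y)$ (not just the individual rows $a_i$), or a differently defined $\al$.
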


\begin{proof}
Take $x\in A$ and $y \in P_B(x)$ such that $d(y,A) > d(A,B)$ and let $x'\in P_A(y)$.
Then, $x-y \in N_B(y)$ and $y-x'\in N_A(x') = \R_+(c)$. Furthermore, since $A$ and $B$ are closed convex sets and $y-x'\in N_A(x')$, we must have $x'-y\notin N_B(y)$, since otherwise by Proposition~\ref{c-con} $d(y,A) = d(A,B)$, which is a contradiction.
Therefore, $-c\notin N_B(y)$, or equivalently, 
\begin{equation}\label{ax}
\ang{c,v} > - \norm{c}\cdot\norm{v},\quad \text{for all } v\in N_B(y)\setminus\{0\},
\end{equation}
since for vectors $u,v\in \mathbb{R}^{n}$ with $\norm{u}=\norm{v} = 1$, we have $\ang{u,v}=-1$ if and only if $u=-v$.
By \cite[Theorem 6.46]{rockafellar2009variational}, we have 
$$
N_B(y) = \cone\{a_i:\; i\in I(y) \},
$$
where $I(y) := \{i:\,\ang{a_i,y} = b_i\}$. Note that if $i \in I(y)$, then \eqref{ax} implies $\ang{a_i,c}> - \norm{a_i}\cdot\norm{c}$.
 Hence, $$x-y\in N_B(y) \subset \cone\{a_i:\; \ang{a_i,c}>-\norm{a_i}\cdot\norm{c}\}=: S,$$ 
 and from \eqref{al}, we obtain
 \begin{align*}
d\left(\frac{y-x}{\norm{x-y}}, \R_+(c) \right) &= d\left(\frac{x-y}{\norm{x-y}}, \R_+(-c) \right)\\
&\ge \inf\left\{d\left(\frac{v}{\norm{v}}, \R_+(-c) \right): v\in S\right\}\\
& = \min_{\substack{
		i=1,\ldots,m;\\ \ang{a_i,c}>-\norm{a_i}\norm{c} 
}}d \left(\frac{a_i}{\norm{a_i}},\R_{+}(-c)\right)\\
 &\ge 2\al.
 \end{align*}
Set $\de:= \al\norm{x-y}$, $\rho :=\norm{x-y}$. Take $z\in A\cap B_\de(x)\cap B_\rho(y)$.
Observe that either $z\in \Int A$ and $N_A(z) = \{0\}$ or $z\in \bd A$ and $N_A(z) = \R_{+}(c)$.
Thus,
\begin{align*}
d\left(\frac{y-z}{\norm{y-z}}, N_A(z) \right) &\ge \frac{\norm{y-x}}{\norm{y-z}} d\left(\frac{y-z}{\norm{y-x}}, \R_+(c) \right)\\
&\ge d\left(\frac{y-x+x-z}{\norm{y-x}}, \R_+(c) \right)\\
&\ge d\left(\frac{y-x}{\norm{y-x}}, \R_+(c) \right) -\frac{\norm{x-z}}{\norm{y-x}}\\
& \ge 2\al - \al = \al.
\end{align*}
Hence, the pair $\{A,B\}$ satisfies Condition 2 with $\beta=0$ and $\al$ defined by \eqref{al}.

Note that the above derivations assume $S \neq \emptyset$, since otherwise no such $x\in A$ and $y\in P_A(x)$ with $d(y,A) > d(A,B)$ exist, and Condition 2 is redundant.
\end{proof}

\begin{corollary}
\label{poly-sub}
Let $A$ and $B$ be as defined in Proposition~\ref{active} with $d(A,B) >0$ and let $\al$ be defined by \eqref{al}. Then, the method of alternating projections, initiated at $x_0 \in A$, attains the minimum distance in at most $2N+1$ steps, where
$$
N: =  \left\lfloor\log_{1-\al^2}\left(\frac{d(A,B)}{d(x_0,B)} \right) \right \rfloor.
$$
Furthermore, if $d(x_0,B)<\frac{d(A,B)}{1-\al^2}$, then the minimum distance is attained after one step.
\end{corollary}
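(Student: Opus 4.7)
The plan is to derive this corollary as a direct specialization of Theorem~\ref{lin-con2}(i) via Proposition~\ref{active}. Proposition~\ref{active} already establishes that the pair $\{A,B\}$ satisfies Condition~2 with $\beta = 0$ and with $\al$ exactly as in \eqref{al}. Because $d(A,B)>0$ and $0=\beta<1$, the hypotheses of Theorem~\ref{lin-con2}(i) hold verbatim, so that theorem applies and yields convergence in at most $2N+1$ iterations, with $N$ given by \eqref{N}.

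The next step is routine simplification. Substituting $\beta=0$ into \eqref{N} and using $\|x_1-x_0\|=d(x_0,B)$ (which holds because $x_1\in P_B(x_0)$) gives
$$
N \;=\; \left\lfloor\log_{1-\al^2}\!\left(\frac{d(A,B)(1-0)}{\|x_1-x_0\|-0\cdot d(A,B)}\right)\right\rfloor \;=\; \left\lfloor\log_{1-\al^2}\!\left(\frac{d(A,B)}{d(x_0,B)}\right)\right\rfloor,
$$
which matches the constant stated in the corollary and settles the first assertion.

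For the one-step claim, the plan is to recognize that the hypothesis $d(x_0,B)<d(A,B)/(1-\al^2)$ is precisely the case $N=0$ of the bound just derived; equivalently, and perhaps more transparently, I would argue directly that if the minimum distance were not attained at the very first projection, then inequality \eqref{sub-P1} from the proof of Theorem~\ref{lin-con2} (applied with $n=0$ and $\beta=0$) would imply
$$
d(A,B) \;<\; d(x_1,A) \;\le\; (1-\al^2)\,\|x_1-x_0\| \;=\; (1-\al^2)\,d(x_0,B) \;<\; d(A,B),
$$
a contradiction; hence $d(x_1,A)=d(A,B)$, so the next iterate $x_2\in P_A(x_1)$ forms a minimum-distance pair with $x_1$. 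I do not foresee any real obstacle here: all of the substantive work (verifying Condition~2 with the specific $\al$, and converting Condition~2 into a quantitative iteration count) is already carried out in Proposition~\ref{active} and Theorem~\ref{lin-con2}, and what remains is simply the bookkeeping of substituting $\beta=0$ and $\|x_1-x_0\|=d(x_0,B)$.
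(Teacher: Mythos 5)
Your proposal is correct and follows essentially the same route as the paper: invoke Proposition~\ref{active} to get Condition~2 with $\beta=0$, apply Theorem~\ref{lin-con2}(i), and simplify using $\norm{x_1-x_0}=d(x_0,B)$ (since $x_1\in P_B(x_0)$), with the one-step claim being the case $N=0$. The only difference is cosmetic: the paper's proof additionally spends a line verifying explicitly that the constant $\al$ in \eqref{al} lies in $(0,1)$ (so that Condition~2 and Theorem~\ref{lin-con2} are applicable), a fact your argument takes as implicit in the statement of Proposition~\ref{active}, and your direct contradiction via \eqref{sub-P1} for the one-step assertion is just an unwinding of the same bound.
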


\begin{proof}
	Observe that the constant $\al$ given in \eqref{al} is always in $(0,1)$, since $a_i \notin \R_+(-c)$ for $a_i\in S$, and hence
	$$
	0< d(a_i/\norm{a_i}, \R_+(-c)) \le 1,\quad \text{ for each } a_i \in S,
	$$
	where $S$ is as defined in the proof of Proposition~\ref{active}.
	
	By Proposition~\ref{active} and Theorem~\ref{lin-con2}, we conclude that the alternating projections converge after $2N+1$ steps with
	$$
	N = \left\lfloor \log_{1-\al^2}\left(\frac{d(A,B)}{d(x_0,B)} \right) \right\rfloor.
	$$
	When $d(x_0,B)< \frac{d(A,B)}{1-\al^2}$, we have $N=0$ and the method converges after one step.
\end{proof}

\begin{remark}\label{rm19}
\begin{enumerate}
	\item Consider two closed convex sets $A$, $B$ and let $a\in A$, $b\in B$ such that $\norm{a-b} = d(A,B)>0$. Then by the definition of normal cone, $b-a \in N_A(a)$ and $a-b \in N_B(b)$. If we shift $A$ by a vector $v := \la(b-a)$ with $\la>0$, then $b-(a-v) \in N_A(a) = N_{A-v}(a-v)$ and $a-v-b\in N_B(b)$. Hence, by Proposition~\ref{c-con}, we have $\norm{(a-v) - b} = d(A-v,B)$. This result is used in the next remark to determine by how much the closed half space needs to be shifted to ensure one-step convergence. 
	\item Consider two sets $A,B$ as defined in Proposition~\ref{active}, and $x_0\in A$. Let $a\in A$ and $b\in B$ with $\norm{a-b} = d(A,B)$, and then $c = \la(b-a)$ for some $\la >0$, since $b-a \in N_A(a) = \R_+(c)$. To ensure one-step convergence, we can shift $A$ by a vector $v := \mu c$, where $\mu>0$ and $\mu> \tfrac{1}{\al^2\norm{c}}\left((1-\al^2)d(x_0,B) - d(A,B) \right)$. Then by part (i) above, $d(A-v,B) = d(A,B)+\norm{v} = d(A,B)+\mu\norm{c}$. From the choice of $\mu$, we have
	$$
	\mu\norm{c}+d(x_0,B) < \frac{d(A,B) + \mu\norm{c}}{1-\al^2}.
	$$
	Therefore,
	$d(x_0-v,B) \le d(x_0,B)+\norm{v} = d(x_0,B) + \mu\norm{c } < \frac{d(A,B) + \mu\norm{c}}{1-\al^2} = \frac{d(A-v,B)}{1-\al^2}$. By Corollary~\ref{poly-sub}, the alternating projections for $A-v$ and $B$, starting from $x_0-v$, converge after one step.
\end{enumerate}
\end{remark}
We now propose a projection method for solving linear programming problems of the form (LP): $\min_{x:\,\mathbb{A}x\leq b}\ang{c,x}$,
where $c\in \R^n$ and $\mathbb{A}$ is a matrix. We assume that LP is bounded with $M$ as a lower bound that is strictly less than the optimal value. Set 
\begin{align*}
A&:=\{x\in \R^n: \ang{c,x}\le M \},\\
B&:=\{x\in \R^n: \mathbb{A}x \le b\}.
\end{align*}
A solution of LP is obtained by applying iteratively the alternating projections $P_B(x_{2n})$, $P_{A}(x_{2n+1})$ until the minimum distance is attained. By Remark~\ref{rm19}(ii), 
the projection of $x_0-\mu c$ with $\mu > \frac{(1-\al^2)}{\al^2\norm{c}}d(x_0,B)$  onto $B$ is a solution of LP.

\section{Acknowledgement}
The authors are supported by the Australian Research Council through the Centre for Transforming Maintenance through Data Science (grant number IC180100030).  We are also grateful to the referees for their constructive comments.
\end{multicols}
\bibliography{BUCH-kr,Kruger,KR-tmp,Hoa} 
\end{document}